\newcommand{\rmspace}{\!\!\!}
\DeclarePairedDelimiter{\set}{\{}{\}}
\DeclarePairedDelimiter{\prt}{(}{)}
\newcommand{\curlyA}{\mathcal{A}}
\newcommand{\curlyB}{\mathcal{B}}
\newcommand{\curlyC}{\mathcal{C}}
\newcommand{\curlyD}{\mathcal{D}}
\newcommand{\curlyG}{\mathcal{G}}
\newcommand{\curlyI}{\mathcal{I}}
\newcommand{\curlyN}{\mathcal{N}}
\newcommand{\curlyZ}{\mathcal{Z}}
\newcommand{\OmegaT}{{\Omega_T}}
\newcommand \commentout[1] {}
\DeclareMathOperator*{\supp}{\operatorname{supp}}
\newcommand{\partialt}[1]{\dfrac{\partial#1}{\partial t}}
\DeclareMathAlphabet{\mathup}{OT1}{\familydefault}{m}{n}
\newcommand{\dx}[1]{\mathop{}\!\mathup{d} #1}
\theoremstyle{plain}
\newtheorem{theorem}{Theorem}[section]
\newtheorem{lemma}[theorem]{Lemma}
\newtheorem{corollary}[theorem]{Corollary}
\theoremstyle{remark}
\newtheorem{remark}[theorem]{\bf Remark}
\newcommand{\ie}{\emph{i.e.}}
\newcommand{\cf}{\emph{cf.}\;}
\newcommand{\sign}{\mathrm{sign}}
\newcommand{\ddt}{\frac{\dx{}}{\dx{t}}}
\newcommand{\grad}{\nabla}
\renewcommand{\div}{\nabla\cdot}
\newcommand{\Lap}{\Delta}
\newcommand{\n}{n_\gamma}
\newcommand{\p}{p_\gamma}
\newcommand{\f}{f_\gamma}
\newcommand{\R}{\mathbb{R}}
\begin{document}
\title[Incompressible Limit for Tissue Growth]{On The Incompressible Limit for a Tumour Growth Model Incorporating Convective Effects}

\author{Noemi David$^{1,2}$}
\author{Markus Schmidtchen$^{3}$}

\address{$^{1}$ Sorbonne Universit\'e, Inria, Universit\'e de Paris, Laboratoire Jacques-Louis Lions UMR 7598, Paris 75005, France (noemi.david@ljll.math.upmc.fr).}
\address{$^{2}$ Dipartimento di Matematica, Universit\`a di Bologna, Italy.}
\address{$^{3}$ Institute For Scientific Computing, Technische Universit\"at Dresden\\ 
Zellescher Weg 12-14, 01069 Dresden, Germany. (markus.schmidtchen@tu-dresden.de).}

\maketitle
\begin{abstract}
    In this work we study a tissue growth model with applications to tumour growth. The model is based on that of Perthame, Quir\'os, and V\'azquez proposed in 2014 but incorporates the advective effects caused, for instance, by the presence of nutrients, oxygen, or, possibly, as a result of self-propulsion. The main result of this work is the incompressible limit of this model which builds a bridge between the density-based model and a geometry free-boundary problem by passing to a singular limit in the pressure law. The limiting objects are then proven to be unique.
\end{abstract}

\vskip .4cm
\begin{flushleft}
    \noindent{\makebox[1in]\hrulefill}
\end{flushleft}
	2010 \textit{Mathematics Subject Classification.} 35B45; 35K57; 35K65; 35Q92; 76N10;  76T99; 
	\newline\textit{Keywords and phrases.} Porous medium equation; Tumour growth; Aronson-B\'enilan estimate; \\Incompressible limit; Free boundary; Hele-Shaw problem.\\[-2.em]
\begin{flushright}
    \noindent{\makebox[1in]\hrulefill}
\end{flushright}
\vskip 1.5cm

\section{Introduction}
Modelling living tissue poses a whole range of challenges. On the one hand, it is important to identify the biomedical drivers that should be incorporated in the model, while, on the other hand there are certain modelling choices that need to be discussed. One of these choices that, in a way, separates the community is the type of model used to describe tissue growth. Roughly speaking we identify the following two types of models: those that describe the tissue as an evolving distribution in space and those that describe the tissue as an evolving domain in space. While the first type is mostly based on a partial differential equation  description, the latter is known as a free-boundary or evolving boundary model.\\

The goal of this paper is to build a bridge between the two types of models by passing to the so-called stiff limit in the population-based model to obtain a free-boundary description. The model we propose here describes the evolution of the tissue density, $\n=\n(x,t)$, and is given by
\begin{align}\label{eq:n}
    \partialt{\n} - \nabla\cdot \prt*{\n \nabla \p} -\nabla\cdot\prt*{\n \nabla \Phi} = \n G(\p). 
\end{align}
on $\R^d$ and for $t>0$. It is equipped with some non-negative initial data $\n(0,x)=\n^0(x) \in L_+^1(\R^d)$. Here $\p = \n^\gamma$ denotes the pressure, $G = G(\p)$ models the cell proliferation (resp. cell death), and  $\Phi = \Phi(x,t)$ denotes a chemical concentration. In order to pass to the incompressible limit  $\gamma \to \infty$ we need to study the equation satisfied by the pressure, \ie, in the equation
\begin{align}
    \label{eq:p}
    \partialt{\p} = \gamma \p (\Delta\p +\Delta\Phi +   G(\p) )+ \nabla \p \cdot\nabla(\p + \Phi). 
\end{align}
While it is intuitive to expect
\begin{align*}
     {p}_\infty (\Delta {p}_\infty +\Delta\Phi +   G({p}_\infty) ) = 0, \quad \text{as well as} \quad p_\infty(n_\infty-1) = 0,
\end{align*}
in the limit, there are technical subtleties, obtaining strong compactness of the pressure gradient to be precise, that need to be overcome. We are by no means the first to ask this question. As a matter of fact, there are already some promising results towards this rigorous limit. However, all of them are borderline and just not good enough to obtain the strong compactness of the pressure gradient. A blend of two techniques finally allows us to settle this open question. The rest of the introduction is dedicated to presenting a historical view on this type of model as well as variations thereof. We will also use this as an opportunity to introduce the tools necessary for the limit passage in a brief, explanatory way.

\subsection{Historical Notes -- The Origin of Incompressible Limits \& the Mesa Problem}
\label{sec:historical_notes}
The question of passing to the incompressible limit has a rich history and several variations of it have been studied in the literature. Historically, the problem has its early foundation in the work of B\'enilan and Crandall on the continuous dependence on $\varphi$ of solutions to the filtration equation $\partial_t n = \Delta \varphi(n)$ in 1981, \cf \cite{BC81}, not too long after the first wellposedness results for the filtration equation around 1960, \cf \cite{OKJ58, Sab61}. The continuous dependence of \cite{BC81} is established using nonlinear m-accretive semi-group theory, notably maximal monotone operators, enabling them to allow for cases of $\varphi$ being a monotone graph. As a matter of fact, it already covers the first result on incompressible limits by choosing $\varphi(z)=z^\gamma$ and assuming non-negative initial data bounded from above by unity.

Henceforth the problem has been attracting a lot of attention. It is worth noting that exponents  \\

In \cite{EHKO86} the authors show the formation of a plateau-like region, which they refer to as `mesa', of nearly constant density $n_\gamma$, for $\gamma \approx \infty$, using a formal asymptotic expansions and working with radial solutions. In \cite{CF87}, too, the authors consider the limit of the density of the porous equation but they can weaken the assumption on the initial data thus extending the results of \cite{BC81}. Moreover, they are able to show that the limit density, $n_\infty$, is independent of time and bounded $0\leq n_\infty \leq 1$. This `stationarity' result on the limit density, $n_\infty$, is obtained upon combining three tools. First, the uniform essential bounds on the compactly supported densities, $n_\gamma$ imply the weak-star convergence of a subsequence. Second, by the classical Aronson-B\'enilan estimate (see \cite{AB} for the original article as well as \cite{BPS2020} and references therein for a survey), it can be inferred that $\partial_t n_\infty\geq 0$, and therefore $n_\infty(x,t)\geq n_\infty(s,x)$ for almost every $x\in \R^d$, $s<t$, and all $\gamma>1$. Finally, the conservation of mass implies that, in fact, $n_\infty(x,t) = n_\infty(s,x)$, which shows that $n_\infty$ is independent of time, \cf \cite{BC81} for the full argument.\\

Later, in 2001, Gil and Quir\'os revisit the study of the incompressible limit of the solution of the porous medium equation defined in $[0,+\infty)\times\Omega$. In their paper they prove that the solution of the porous medium equation converges to that of the Hele-Shaw problem in the sense of Elliot and Janovsky, \ie, in the form of a variational formulation whenever the boundary data $g=g(x)$ is independent of time and the initial data is the indicator function of some bounded set $\Omega_0\subset\Omega$. In this case, the weak formulation  and the variational formulation coincide, \cf \cite[Corollary 4.5]{gil2001convergence}). In their study, \cf  \cite{gil2001convergence}, $\Omega$ is assumed to be a compact subset of $\R^d$ which is equipped with Dirichlet data on the pressure, $\p(t, x)=g(x,t)$ on $\partial \Omega$, for some $g(x,t)\geq 0$.  Let us point out that, given a set $\Omega$ large enough, the case $g\equiv 0$ coincides with the problem studied by Caffarelli and Friedman in \cite{CF87}, and, again, the limit is independent of time. Indeed, Gil and Quir\'os are able to recover the same result from a different perspective, focusing on the role of the pressure rather than the density itself. In the absence of Dirichlet boundary data, \ie, $g\equiv 0$, the limit solution solves a Hele-Shaw problem where the free boundary is actually motionless since the limit pressure vanishes almost everywhere. This can be easily seen by passing to the limit $\gamma \rightarrow \infty$ in the porous medium pressure equation, Eq. \eqref{eq:p}, where, of course, the growth term and the migration term are absent. In conjunction with the uniform essential bounds this immediately yields $\|\nabla p_\infty\|_{L^2(\Omega\times(0,T))}=0$. 

On the other hand, in the case non-vanishing $g\geq 0$ on $\partial\Omega$, the pressure is ``forced'' to be positive near to the boundary, and then, since the pressure gradient is no longer zero, the motion of the free boundary $\partial\{p_\infty>0\}$ is governed by Darcy's law 
$$
    V = -\partial_\nu p_\infty,
$$
where $\nu$ denotes the outward normal on the free boundary. In  \cite{GQ2003} the authors generalise there result towards a broader class of initial data give a description of the positivity set of the densities, $\n$, to that of the limit.

Let us also stress that the conservation of mass no longer holds since there is a source term on the boundary of $\Omega$. Therefore, the proof of the stationarity of $n_\infty$ using the Aronson-B\'enilan estimate fails. Similarly, the proof of $\|\nabla p_\infty\|_{L^2}=0$, no longer holds true due to the fact that the boundary terms arising from integration by parts no longer vanish.

It is also worthwhile noticing that $\p \approx \n \p$, for $\gamma \gg 1$, which leads to the relation 
$$
    p_\infty(1-n_\infty) = 0.
$$
Hence, we infer the inclusion $\{p_\infty>0\}\subset\{n_\infty=1\}$, but we also stress that the two sets need not coincide. In fact, in the case $g=0$, or equivalently the porous medium equation on $\R^d$ with compactly supported initial data, as mentioned above, the limiting pressure vanishes, $p_\infty = 0$, almost everywhere and the limit density is stationary, $n_\infty(x,t)=n^0(x),$ where $0\leq n^0(x) \leq 1$. This means that, even if there are saturation zones, $\set{n_\infty = 1}$, the pressure does not become positive. This situation changes drastically if the model includes a positive growth term of the form
\begin{align*}
     \partialt{\n} - \nabla\cdot \prt*{\n \nabla \p} = \n G(\p), 
\end{align*}
as was proposed in \cite{PQV}. In this case it can be shown that the two sets coincide, \ie, $\{p_\infty>0\} = \{n_\infty=1\}$, and, what is more, the problem is no longer stationary!

\subsection{Contemporary Advances -- Generalisations of the Model}
Emanating from the early works on the mesa problem for the porous medium equation, research began branching out in different directions. In this section we aim at giving a brief overview of different extensions of the porous medium equation, applications of the models obtained this way, as well as techniques used to study their respective incompressible limits analytically. 

The first generalisation concerns the inclusion of a pressure-dependent growth term proposed in the work of \cite{PQV}. Here the authors propose a tissue-growth model where cells move according to a population pressure generated by the total density of the form $p(n) = n^\gamma$. In conjunction with Darcy's law they recover the porous-medium type degenerate diffusion. In addition, they include a proliferation term, $n G(p)$, which models cells divisions with a pressure depending rate. Thus the proliferation rate, $G$, is assumed to be a decreasing function accounting for the fact that cells are less `willing' to divide in packed regimes, \cf Section \ref{sec:including_proliferation}.

The model was then extended by a nutrient distribution, $c(x,t)$, which is assumed to diffuse in the domain and released (resp. decayed) by general $L^2$-processes, \cf Section \ref{sec:including_nutrients}. Most recently, the inclusion of migratory processes, \ie, drift terms given by a velocity field,  $v(x,t)$, as a model extension received a lot of attention, \cf Section \ref{sec:including_drifts}. This is also where our  contribution to the current discourse enters, namely the first rigorous derivation of the complementarity relation, that is, an equation governing the pressure distribution inside of the moving boundary problem. Before we begin discussing our main result we shall also point out recent advances in the area of stiff-limits in the context of pressure laws that are different from Darcy's law, \cf Section \ref{sec:different_pressure_laws}. We conclude our short survey of the literature by mentioning some multi-phase results, where, instead of one equation, two interacting species are considered, \cf Section \ref{sec:multiphase_models}

\subsubsection{A Model including Proliferation}
\label{sec:including_proliferation}
\mbox{}\\
In \cite{PQV}, Perthame, Quir\'os, and V\'azquez propose the model
\begin{align}
    \label{eq:PQV_first_introduced}
    \partialt{\n} - \nabla\cdot \prt*{\n \nabla \p} = \n G(\p).
\end{align}
Their paper is seminal in that they were the first to perform the rigorous stiff pressure limit in the presence of growth terms. 
While strong compactness of the pressure is absolutely sufficient for the Hele-Shaw limit itself, obtaining the so-called complementarity relation which provides an equation for the pressure in the limit is much more involved. In fact, in order to obtain it strong compactness of the pressure gradient is indispensable. To this purpose, using the comparison principle, they show that the Laplacian of the pressure satisfies an Aronson-B\'enilan type estimate, namely $\Delta p + G(p) \gtrsim -C/\gamma t$.

In \cite{KP17} the authors study the same model through a viscosity solution approach. They are able to show that the density converges locally uniformly away from the free boundary $\partial\{p_\infty>0\}$. Moreover, they prove locally uniform convergence of the pressure (as long as the limit is continuous) and that $p_\infty$ is the viscosity solution of the Hele-Shaw problem 
\begin{equation}\label{eq: HS growth}
\begin{split}
    \left\{
    \begin{array}{rll}
    -\Delta p_\infty\!\!\!&= G(p_\infty),  &\text{ in } \{p_\infty>0\},\\[0.7em]
     V\!\!\!&= - \dfrac{|\nabla p_\infty|}{1-\min(1,n^E_\infty)}, &\text{ on } \partial\{p_\infty>0\},
     \end{array}
     \right.
     \end{split}
\end{equation}
where the normal velocity law was only formally presumed in \cite{PQV}, but not rigorously proven. Here, $n^E_\infty$ denotes the trace of the ``external'' limit density on the free boundary, namely the trace of $n_\infty$ from the set $\{n_\infty<1\}$. 

Let us stress the fact that, as the velocity law suggests, the density shows jump discontinuities at the free boundary. Moreover, the velocity blows up when the density reaches value 1, therefore, when new \textit{mesas} appear outside of $\{p_\infty>0\}$, the pressure becomes instantaneously positive in the new nucleated regions, hence exhibiting time discontinuities.

The free boundary problem, Eq. \eqref{eq: HS growth}, was further studied in \cite{MePeQu}, where the authors prove that the velocity law of the free boundary holds both in a weak (distributional) and in a measure theoretical sense. In the same paper, they also provide an $L^4$-bound of the pressure gradient that relies on the Aronson-B\'enilan estimate, which we extend to our model, Eq. \eqref{eq:n}, through a self-contained proof in Lemma~\ref{lemma:L4}, independently of any estimate on $\Lap \p$.

\subsubsection{A Model including Nutrients}
\label{sec:including_nutrients}
\mbox{}\\
In \cite{PQV}, the authors also study an extension of the model including the effect of a nutrient with concentration $c=c(x,t)$ in the growth term
    \begin{align*}
    \partialt{\n} - \nabla\cdot \prt*{\n \nabla \p} = \n G(\p, c_\gamma). 
\end{align*}
While they were able to prove the strong convergence of $\n$ and $c_\gamma$ as $\gamma\rightarrow\infty$, they leave open the question of how to recover the $L^2$-strong compactness of the pressure gradient needed to pass to the limit in the pressure equation and obtain the complementarity relation. 

This problem was addressed in \cite{DP}, where the authors combine a weak version of the Aronson-B\'enilan estimate in $L^3$ with a uniform bound of the pressure gradient in $L^4$ to infer strong compactness. In fact, the $L^\infty$-Aronson-B\'enilan estimate does not hold in the nutrient case, since $G(p,c)$ can be negative and then the comparison principle used in \cite{PQV} fails. Travelling waves solutions of the Hele-Shaw problem that arises in the stiff limit have been studied in \cite{PTV}. Besides, explicit solutions to the limit problem are presented in \cite{LTWZ2018} for initial data of the form of an indicator of a bounded set. Recently, interesting progress have been made in \cite{GKM2020} where the authors are able to establish the incompressible limit and the complementarity relation without relying on any Aronson-B\'enilan-type estimates. Instead, their approach is based on viscosity solutions and establishing the equivalence between the complementarity relation and an obstacle problem. 

\subsubsection{Models including local and non-local Drifts}
\label{sec:including_drifts}
\mbox{}\\
In 2010, Kim and Lei introduced the notion of viscosity solution for the porous medium equation with drift
\begin{equation*}
     \partialt{\n} =\Delta \n^\gamma +\nabla\cdot\prt*{\n \nabla \Phi},
\end{equation*}
 and they prove that it coincides with the weak solution in the distributional sense, \cf \cite{KL10}. 
Using the same viscosity approach, in \cite{AKY14} the authors study the link between the Hele-Shaw model with drift
\begin{align*} 
    \begin{split}
    \left\{
    \begin{array}{rll}
    -\Delta p\!\!\! &= \Delta \Phi,  &\text{ in } \{p>0\},\\[0.7em]
     V \!\!\!&= - \prt*{\nabla p+\nabla \Phi}\cdot\nu, &\text{ on } \partial\{p>0\}, 
     \end{array}
     \right.
    \end{split}
\end{align*}
and the congested crowd motion model 
$$
    \partial_t{n} + \nabla\cdot(n \nabla \Phi) = 0,
$$ 
if $n<1,$ with $n \leq 1$, where the latter constraint comes from the singular limit in the nonlinear diffusion term. To prove the equivalence of the two models, they study the asymptotics of the porous medium equation with drift as $\gamma \rightarrow \infty$. They show that the viscosity solution converges locally uniformly to a solution of the Hele-Shaw model. At the same time, using the metric setting of the 2-Wasserstein space, they infer the convergence to the aforementioned congested crowd motion model. To this purpose, they assume the potential $\Phi$ to be sub-harmonic, \ie, \ $\Delta \Phi >0$. While the convergence in the 2-Wasserstein distance holds for general initial data $0\leq n_0\leq 1$, the locally uniform limit holds only for \textit{patches}, namely $n^0=\mathds{1}_{\Omega_0}$, with $\Omega_0$ a compact set in $\R^d$. This result was extended in 2016,  by Craig, Kim, and Yao, \cf \cite{CKY18} to a model with non-local Newtonian potential, $\curlyN$,
\begin{equation*}
    \partialt \n = \Delta \n ^\gamma + \div(\n \nabla \curlyN \star \n).
\end{equation*}
The main novelty they introduce is that they are able to study the incompressible limit despite the lack of convexity. In fact, unlike the congested drift equation studied in \cite{AKY14}, the energy related to the aggregation equation through the 2-Wasserstein gradient flow structure is not semi-convex, \cf \cite{CKY18}.

The question of how to pass to the limit $\gamma \rightarrow \infty$ in the porous medium equation with a drift and a non-trivial source term has been addressed in \cite{KPW19}. The authors propose a model with a generic vector field $v:\R^d\times \R^+ \rightarrow \R^d$ as drift term, \ie,
\begin{equation*}
    \partialt{\n} - \Delta \n^\gamma + \div (\n \ v) = \n G,
\end{equation*}
with a growth rate $G=G(x,t)$. Through viscosity solutions methods, they prove that as $\gamma\rightarrow \infty$ the model converges to a free boundary model of Hele-Shaw type. Their work improves the results previously achieved in \cite{AKY14}, extending the class of initial data from patches to any continuous and compactly supported function bounded between zero and one. \\

\subsubsection{Different Pressure Laws and Relations}
\label{sec:different_pressure_laws}
\mbox{}\\
As foreshadowed above, in certain contexts Darcy's law may not be the appropriate relation that links the velocity field to the mechanical pressure. Depending on the modelling context and the model complexity, the pressure is incorporated in the fluid velocity through Stokes flow, Brinkman's law or Navier–Stokes' law, rather than Darcy's law. We briefly present recent works of incompressible limits for different pressure laws and relations.

\mbox{}\\
\textbf{Singular Pressure}\\
Parallel to the advances in the context of incompressible limits with power-law pressures it has been observed that another pressure law of the form
\begin{align}
    \label{eq:hecht_pressure}
    p_\epsilon(n) = \epsilon \frac{n}{1 - n},
\end{align}
can be used to model living tissue, \cf \cite{hecht2017incompressible}.  Using this singular pressure law already introduces an incompressibility condition in the sense that the pressure blows up when the cell density reaches the saturated regime, $n=1$. Thus, singular pressure laws of this kind are encountered in scenarios when non-overlap conditions are enforced already at a population-level, \cf \cite{DHN2011, PZ2015} in the context of congestive collective crowd motion, \cite{BDDR2008,BDLMRR2008} in the context of traffic flow modelling.
In \cite{hecht2017incompressible} the authors are able to show that the pressure in Eq. \eqref{eq:hecht_pressure} is suitable to pass to the incompressible limit using a generalisation of the Aronson-B\'enilan argument by Crandall and Pierre, \cf \cite{crandall1982regularizing}. 

\mbox{}\\
\textbf{Brinkman Law Pressure}\\
Unlike Darcy's law using the Brinkman law,
\begin{align*}
    -\nu \Delta W + W = p(n),
\end{align*}
accounts for visco-elastic effects, \cite{BD2009}. Based on this observation, in \cite{PV2015} the authors propose a modification of the above model, Eq. \eqref{eq:PQV_first_introduced}, incorporating the Brinkman law, \ie,
\begin{align*}
    \partialt{\n} - \nabla\cdot \prt*{\n \nabla W_\gamma} = \n G(\p).
\end{align*}
Different from the Darcy law setting the authors are forced to use a different set of techniques since the problem is no longer degenerate parabolic but, instead, of transport nature. While, at first glance, the Brinkman law has a regularising effect on the velocity field it makes obtaining compactness of the pressure a hard endeavour. Using a kinetic reformulation and controlling oscillations in the pressure finally yields the required compactness to pass to the incompressible limit and obtain a visco-elastic version of the complementarity relation, \cf \cite[Theorem 1.1]{PV2015}. For pressure laws of the form $p_\epsilon(n) = \epsilon \mathds{1}_{n\geq 1} \log(n)$, quite recently, explicit travelling wave profiles we obtained by \cite{LTWZ2019}. Moreover, the authors provide an apt numerical scheme to track the moving front accurately.

\mbox{}\\
\textbf{Stokes Flow}\\
It is important to stress that both Darcy's law and Brinkman's law are, at least, formally related to the Navier-Stokes law which can therefore be seen as the most general relation between the fluid velocity and the mechanical pressure. In \cite{VaucheletZatorska} the authors prove the incompressible limit for a proliferating species whose velocity is linked to the pressure through the Navier-Stokes law thus generalising the case without birth and death processes of \cite{LM1999}. The authors use the fact that the growth rate is linear in the pressure such that weak compactness of the pressure suffices in order to pass to the limit, so long as the density itself is strongly compact. While the weak compactness of the pressure follows from a renormalisation argument the strong compactness of the density is based on a compactness-propagation argument introduced (and later refined) in \cite{BJ2013, BJ2017, BJ2018}.

\mbox{}\\
\textbf{Active Motion}\\
In \cite{activemotion} the authors extend the model of \cite{PQV} by an additional active motion term in form of a linear diffusion term. They are able to rigorously perform the incompressible limit, in fact they obtain the same complementarity relation as in the absence of active motion, for certain initial data not relying on the Aronson-B\'enilan for certain initial data. Nonetheless, the restriction on the initial data can be dropped by employing the argument of Crandall and Pierre, in \cite{CrPi1982}. In \cite{TVCVDP2014} the authors propose a very similar model based on Brinkman's law (unlike \cite{activemotion}) including a linear diffusion term. They observe that travelling waves exist and analyse their profile.

\mbox{}\\
\textbf{Fractional Diffusion}\\
In 2015, J.-L. V\'azquez opened another both fascinating and challenging research direction by addressing the mesa problem in the fractional pressure case, \cf \cite{Vaz2015}. More precisely, he studies the incompressible limit, $\gamma \to \infty$, in the fractional porous medium equation,
\begin{align*}
    \partialt{\n} + \prt*{-\Delta}^{-s} \prt*{\n}^\gamma = 0,
\end{align*}
for $s\in(0,1)$. Unlike the case of classical porous medium type diffusion, the limiting profile exhibits tails and does not remain compactly supported. The analysis is of orders of magnitude harder since the classical theory discussed in Section \ref{sec:historical_notes} relies on comparison principles and the fact that it is known what happens to the Barenblatt profiles in the incompressible limit. In the fractional setting the explicit source solutions are not known explicitly. None the less, they are the starting point of the analysis of \cite{Vaz2015}. Many questions remain open, in particular the inclusion of other processes such as reactions and drifts.

\mbox{}\\
\subsubsection{Multi-Species System}
\label{sec:multiphase_models}
Recently, there has growing interest in multi-phase extensions of the above model. Instead of merely modelling the evolution of a single species, say, cancer tissue, other phases such as interstitial fluid, healthy tissue, dead tissue, \ldots, are incorporated into the model. The extension to multiple interacting species not only leads to interesting behaviours such as phase separation but also raises novel mathematical challenges such as the loss of regularity at so-called internal layers, \ie, regions where two or more phases get in contact. Recently, \cite{BPPS} have established the rigorous incompressible limit for a two-species model consisting of normal and abnormal tissue, respectively for a Darcy law type pressure. Unlike in the single-species case, the pressure is now generated by the joint population in form of a power law. However, the lack of regularity is such that only a one dimensional result could be obtained and the general case was successfully addressed only recently, \cf \cite{LX2021}. In a similar fashion, a one-dimensional result could be obtained, see \cite{DHV},  when the pressure is given by the singular law, Eq. \eqref{eq:hecht_pressure} using the generalisation of the Aronson-B\'enilan estimate introduced in \cite{CrPi1982}.  

A more complete picture is available if the cells do not avoid overcrowding due to Darcy's law but if they move according to Brinkman's law. Coupling the cell's `velocity' to the pressure accounts for visco-elastic effects, \cf  \cite{DeSc, DEBIEC2020}. A coupling through the more general Stoke's flow remains a challenging open problem. Recently, \cite{DLZ2020} proposed a two-cell-type model coupled with nutrients to study the effect of autophagy on tumour growth. In their work they, too, consider an incompressible limit, however the results remains formal due to difficulties similar to that of the system without nutrients treated by \cite{DHV, BPPS}.

\subsection{Our Contribution}
As set out in the introduction, there have been several promising steps towards establishing the incompressible limit and the complementarity relation for reaction-diffusion models incorporating convective effects. 
As a matter of fact, just like the authors of \cite{KPW19}, we address the problem of passing to the incompressible limit in a porous medium equation with both a drift and a source term. While their approach is based on a viscosity solution approach, we use a weak (distributional) interpretation. By employing a blend of recently developed tools, \ie, an $L^p$-version of the celebrated Aronson-B\'enilan estimate, \cf \cite{AB}, along with the optimal $L^4$-regularity of the pressure gradient observed in \cite{DP}, we can obtain strong compactness of the pressure gradient and proceed to passing to the incompressible limit and obtain the complementarity relation in the same vein as \cite{BPPS}. To summarise:
\begin{itemize}
    \setlength\itemsep{1em}
    \item We obtain an $L^3$-space-time estimate on the negative part of the Laplacian of the pressure which ultimately helps us obtain strong compactness of the pressure gradient. We note that an $L^\infty$-version has been obtain recently in \cite[Theorem 3.1]{KZ2020}. However, the lower bound on the Laplacian of the pressure that they infer, $\Lap p \geq - C/t - C$, does not go to zero as $\gamma \rightarrow \infty$, as in the classical Aronson-B\'enilan estimate. Nonetheless, this result in conjunction with our uniform $L^4$-estimate on the pressure gradient would already be sufficient to obtain the complementarity relation rigorously, following \cite{DP, BPPS, MePeQu}. 
    \item Here, we choose a different route by only striving for the much weaker $L^3$-estimate on the negative part of the Laplacian of the pressure. This, in turn, allows us to drastically relax the  $C^{3,1}_{x,t}$-regularity of the velocity field, $\nabla \Phi$, required by \cite{KZ2020}. In fact, our assumptions on the drift, \cf Eq. \eqref{eq:assptn_phi_BV} and Eq. \eqref{eq:assptn_phi_AB}, in a way boil down to controlling certain third derivatives in $L_{\mathrm{loc}}^{12/5}(Q_T)$.
    \item Finally, to the best of our knowledge, we are the first to prove the uniqueness of the solution, $(n_\infty,p_\infty)$, to the limit problem 
\begin{equation*} 
        \partialt{n_\infty}  = \Delta p_\infty + n_\infty G(p_\infty) + \nabla \cdot (n_\infty \nabla \Phi).
\end{equation*}
This result is only possible since we work with weak solutions in the classical sense which ultimately allows us to apply a variation of Hilbert's duality method. The only related result in this direction in the literature is given by \cite{AKY14} where the uniqueness of so-called patch solutions is shown in the drift-diffusion model with $\Delta \Phi >0$ in the absence of growth dynamics.
\end{itemize}
Moreover, our approach provides an answer to several open problems proposed in \cite{KPW19}:
\begin{itemize}
    \setlength\itemsep{1em}
    \item The first question the authors raise concerns the monotonicity assumption on $G(p) + \Lap \Phi > 0$, which in our case is not necessary. An improvement in this direction has also been obtained very recently, \cite{GKM2020}. We stress that in the growth rate in \cite{KPW19} does not depend on the pressure but on space and time, only.
    \item The next question concerns the class of initial data. In \cite{KPW19}, the authors write ``A more interesting question arises with the initial data that is larger than $1$ at some points. In such cases there is a jump in the solution at $t=0$ in the limit `$\gamma \to \infty$' which adds another challenge in the analysis.''\footnote{This quote is directly taken from \cite{KPW19} where we only adapted the notation to that of our paper.} This effect has already been observed at the early stages of this singular limit problem. The parts of the density that are larger than $1$ are known to ``collaps'' immediately and a mesa-structure is obtained instantaneously, for instance, \cf \cite{CF87}. Following our approach, we can allow for the larger class of  non-negative $L^1(\R^d) \cap L^\infty(\R^d)$ functions with compact support as initial data.\footnote{While $L^\infty$-data with compact support immediately implies integrability, we trust that the assumption on the support may be removed by a localising argument in the spirit of \cite{DP, GPS}.} 
    \item Finally, in \cite{KPW19}, the authors postulate $BV$-regularity of the limiting density, also suggested by \cite{DMSV2016} based on the ``five-gradients-estimate'' using tools from optimal transportation. Even though our arguments do not borrow techniques from optimal transport but, instead, rely on Sobolev compactness theory, we are able to improve the regularity result in that we obtain the $BV$-regularity of the limit density for any initial data. What is more, we additionally have an $L^{4}$-regularity of the limit pressure gradient, which, to the best of our knowledge, is novel. 
\end{itemize}

\subsection{Problem Setting and Main Results}
Before we present the main results of our paper let us introduce some notation used throughout this work. Henceforth, we call $Q_T:=\R^d \times (0,T)$ the truncated space-time cylinder and drop the subscript $T$ to denote the entire cylinder, \ie,  $Q:= \R^d \times (0,\infty)$. Besides, for the sake of readability, we shall employ the short-hand notation
$$
    \n := \n(t) := \n(x,t),
$$
and, similarly,  
$$
    \p = \p(t):=\p(x,t).
$$
Moreover, throughout, $C>0$ denotes a generic positive constant independent of $\gamma$ that may change from line to line.\\

In order to be able to establish our result we impose the following set of assumptions which, for clarity, are split into assumptions on the initial data, the growth terms, and the advective term, respectively. 

We assume that for every $\gamma>1$ the initial data are non-negative, integrable, and uniformly essentially bounded, \ie, 
\begin{align}
    \label{eq:assptn_init1}
    \tag{A1-$\n^0$}
    \n^0\in BV(\R^d)\cap L^\infty(\R^d), \quad  0\leq \n^0\leq n_M, \quad  \mathrm{and} \quad  0\leq \p^0\leq p_M,
\end{align}
for some constants $n_M, p_M>0$. Here $BV$ denotes the space of functions with bounded variation. Moreover, we assume the initial population is contained in a compact set, \ie, there exists a bounded set $K\subset \R^d$ such that
\begin{align}
    \label{eq:assptn_init2}
    \tag{A2-$\n^0$}
    \supp(\n^0) \subset K.
\end{align}
Let us notice that, thanks to the finite speed of propagation property of porous medium type equations, assumption \eqref{eq:assptn_init2} implies that, for any $T>0$, there exists a bounded domain $\Omega\subset \R^d$ such that the supports of $\n(\cdot, t), \p(\cdot, t)$ are contained in $\Omega$ for any $t\in[0,T]$, uniformly in $\gamma$, as proven in the next section, \cf Lemma~\ref{lemma:aprioriestimates}.   

In addition, we suppose that there exists a positive constant $C$ independent of $\gamma$ such that 
\begin{align}
    \label{eq:assptn_init3}
    \tag{A3-$\n^0$}
   \|\Delta (\n^0)^{\gamma+1}\|_{L^1(\R^d)} 
   + \|\nabla \p^0\|_{L^2(\R^d)} + \||\Delta p^0|_-\|_{L^2(\R^d)}\leq C. 
\end{align}
Note, that strictly speaking, the $L^2$-bound on the pressure gradient is not required as it is a consequence of the $L^2$-control on the Laplacian of the pressure. Besides we make the biological assumption
\begin{align}
    \label{eq:assptn_growth}
    \tag{A-$G$}
    G'(p) < -\alpha, \quad \mathrm{and} \quad G(p_M) = 0,
\end{align}
for some $\alpha>0$ and all $p \in [0, p_M]$, \ie, to include the tendency of tissue to grow slower as the pressure increases and starts to die when the pressure exceeds the homeostatic pressure, $p_M$.
Finally, we have to make the following regularity assumptions on the chemical distribution  
\begin{align}
    \label{eq:assptn_phi_BV}
    \tag{A1-$\Phi$}
    \left\{
    \begin{array}{rl}
        \nabla \prt*{\partial_t \Phi} \in& \rmspace L^1((0,T); L_{\mathrm{loc}}^\infty(\R^d)),\\[0.8em]
        \Delta \prt*{\partial_t \Phi} \in& \rmspace L_{\mathrm{loc}}^1(Q_T),\\[0.8em]
          D^2\Phi \in& \rmspace  L_{\mathrm{loc}}^\infty(Q_T),\\[0.8em]
         \grad \Phi \in&  \rmspace  L_{\mathrm{loc}}^2(Q_T)\cap L_{\mathrm{loc}}^\infty(Q_T),     
    \end{array}
    \right.
\end{align}
and
\begin{align}
    \label{eq:assptn_phi_AB}
    \tag{A2-$\Phi$}
    \begin{array}{rl}
         \grad (\Lap \Phi) \in & \rmspace  L_{\mathrm{loc}}^{12/5}(Q_T).
    \end{array}
\end{align}
Note, that the additional assumption, (A2-$\Phi$), is required solely for technical reasons to establish the control of the Laplacian of the pressure.

Under these hypotheses we are now able to state the two main theorems of this work. The first concerns the complementarity relation.
\begin{theorem}[Complementarity relation]  
We may pass to the limit in Eq. \eqref{eq:p} as $\gamma \rightarrow \infty$ and establish the so-called \textit{complementarity relation}
\begin{equation} \label{eq:cr}
    p_\infty (\Lap p_\infty + \Lap \Phi + G(p_\infty))=0,
\end{equation}
in the distributional sense. Moreover, $0\leq n_\infty\leq 1$ and $p_\infty\geq 0$ satisfy the equation
\begin{subequations}
\begin{equation} \label{eq:limitproblem}
        \partialt{n_\infty} = \Delta p_\infty + n_\infty G(p_\infty) + \nabla \cdot (n_\infty \nabla \Phi),
\end{equation}
in $\curlyD'(Q_T)$, as well as
\begin{equation}
\label{eq:limitproblem2}
    p_\infty(1-n_\infty) = 0,
\end{equation}
almost everywhere.
\end{subequations}
\end{theorem}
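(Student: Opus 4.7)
The plan is to first harvest the $\gamma$-independent estimates into compactness statements, then pass to the distributional limit in the density equation \eqref{eq:n}, and finally exploit strong $L^2$-compactness of the pressure gradient to pass to the limit in the singular pressure equation \eqref{eq:p}. The uniform $L^\infty$ bounds on $\n$ and $\p$, the common compact support provided by Lemma~\ref{lemma:aprioriestimates}, the $BV$-estimate on the density, and the uniform $L^4$-bound on $\nabla\p$ from Lemma~\ref{lemma:L4} furnish, via Helly's selection principle together with a standard time-equicontinuity argument based on \eqref{eq:n}, a subsequence along which $\n\to n_\infty$ strongly in $L^1(Q_T)$, $\p\to p_\infty$ strongly in every $L^q(Q_T)$ with $q<\infty$, and $\nabla\p\rightharpoonup\nabla p_\infty$ weakly in $L^4(Q_T)$. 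The pointwise identity $\n=\p^{1/\gamma}\le p_M^{1/\gamma}$ together with $p_M^{1/\gamma}\to 1$ forces $0\le n_\infty\le 1$ in the limit, and $p_\infty\ge 0$ is inherited from $\p\ge 0$.

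Passing to the distributional limit in \eqref{eq:n} is then routine: the convective flux $\n\nabla\Phi$ and the reaction $\n G(\p)$ converge by strong-times-fixed and dominated convergence, respectively, while the porous-medium flux is recast as
\begin{equation*}
    \n\,\nabla\p \;=\; \tfrac{\gamma}{\gamma+1}\nabla\bigl(\n^{\gamma+1}\bigr)\;=\;\tfrac{\gamma}{\gamma+1}\nabla(\n\,\p),
\end{equation*}
reducing the task to proving $\n\,\p\to p_\infty$ strongly in $L^1(Q_T)$. This follows from $\n\,\p=\p\cdot\p^{1/\gamma}$, the bound $\p\le p_M$, and $p_M^{1/\gamma}\to 1$. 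Comparing the two ways of writing the limit of $\n\,\p$, namely $n_\infty p_\infty$ and $p_\infty$, yields the graph relation \eqref{eq:limitproblem2} and closes the passage to \eqref{eq:limitproblem}.

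The genuine obstacle is the complementarity relation \eqref{eq:cr}. Dividing \eqref{eq:p} by $\gamma$ and pairing with $\varphi\in C_c^\infty(Q_T)$ shows that
\begin{equation*}
    \iint_{Q_T}\p\,\bigl(\Lap\p+\Lap\Phi+G(\p)\bigr)\,\varphi\,\dx{x}\dx{t}
    \;=\; -\tfrac{1}{\gamma}\iint_{Q_T}\bigl[\p\,\partial_t\varphi + \nabla\p\cdot\nabla(\p+\Phi)\,\varphi\bigr]\dx{x}\dx{t},
\end{equation*}
and the right-hand side vanishes as $\gamma\to\infty$ thanks to the uniform $L^\infty$-bound on $\p$ and the $L^2$-bound on $\nabla\p$. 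On the left the only delicate term is $\p\,\Lap\p=\tfrac12\Lap(\p^2)-|\nabla\p|^2$: the first piece converges distributionally from the strong $L^1$-convergence of $\p^2$, whereas the second demands \emph{strong} $L^2_{\mathrm{loc}}$-compactness of $\nabla\p$. This is the main obstacle, and it is precisely where the novel $L^3$-bound on $(\Lap\p)_-$ enters the picture: combined with the $L^4$-bound on $\nabla\p$ it powers a Minty-type monotonicity computation, in the spirit of \cite{DP, BPPS, MePeQu}, that yields
\begin{equation*}
    \limsup_{\gamma\to\infty}\iint_{Q_T}|\nabla\p|^2\,\varphi\,\dx{x}\dx{t}
    \;\le\; \iint_{Q_T}|\nabla p_\infty|^2\,\varphi\,\dx{x}\dx{t}
\end{equation*}
for every non-negative $\varphi\in C_c^\infty(Q_T)$. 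Coupled with the weak $L^2$-lower semicontinuity already on hand, this upgrades weak to strong convergence of $\nabla\p$, closes the limit passage in $\p\,\Lap\p$, and delivers \eqref{eq:cr}.
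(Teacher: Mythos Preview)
Your overall outline is correct and parallels the paper's structure: compactness of $\n$ and $\p$, the identity $\n\p=\p^{(\gamma+1)/\gamma}\to p_\infty$ to obtain \eqref{eq:limitproblem2} and to rewrite the flux $\n\nabla\p=\tfrac{\gamma}{\gamma+1}\nabla(\n\p)$, and division of \eqref{eq:p} by $\gamma$ to isolate $\p(\Delta\p+\Delta\Phi+G(\p))$. The paper's proof of Theorem~\ref{thm:CR} does exactly this assembly.

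The one substantive divergence is in the crucial step, the strong $L^2$-compactness of $\nabla\p$. You invoke a ``Minty-type monotonicity computation'' to produce the limsup inequality, but you do not say what quantity is being tested, nor how the $L^3$-bound on $(\Delta\p+G)_-$ and the $L^4$-bound on $\nabla\p$ actually combine to give $\limsup\iint|\nabla\p|^2\varphi\le\iint|\nabla p_\infty|^2\varphi$. As written this is a gap: no energy identity for \eqref{eq:p} delivers this inequality directly, because of the prefactor $\gamma$. The paper does \emph{not} use a Minty trick. Instead, strong convergence of $\nabla\p$ is established beforehand in Lemma~\ref{lemma:strongconv} by a different mechanism: the $L^3$ Aronson--B\'enilan estimate first yields $\Delta\p\in L^1(Q_T)$ uniformly (Corollary~\ref{corollary:L1laplacep}); then a Br\'ezis--Gallou\"et-type Cauchy-sequence argument, testing $\Delta\p-\Delta p_{\hat\gamma}$ against a truncation $\psi(\p-p_{\hat\gamma})$, shows $\nabla\p$ is Cauchy in $L^1(Q_T)$; interpolation with the $L^4$-bound upgrades this to $L^q$ for $q<4$; and time compactness is obtained by a mollification (Aubin--Lions) argument using $\partial_t\p\in L^1(Q_T)$. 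The $L^4$-bound enters only for interpolation, not for any monotonicity step.

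A minor omission: strong compactness of $\p$ itself requires the uniform bound $\partial_t\p\in L^1(Q_T)$ from Lemma~\ref{lemma:aprioriestimates}(iii), which you do not cite; the ingredients you list (BV on $\n$, $L^4$ on $\nabla\p$) give only spatial compactness of $\p$.
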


The complementarity relation, Eq.  \eqref{eq:cr}, is a crucial link that allows us to bridge the gap between the compressible model, Eq. \eqref{eq:n}, and the geometrical free boundary problem of Hele-Shaw type. Let us define the set
$$
    \Omega(t):=\{x \ |\  p_\infty(x,t)>0\}.$$
Then, the pressure satisfies
\begin{equation*} 
   \left\{
  \begin{aligned}
    -\Delta p_\infty&= \Delta \Phi + G(p_\infty), \quad &&\text{ in } \Omega(t),\\[0.4em]
    p_\infty&= 0, \quad &&\text{ on } \partial\Omega(t),
    \end{aligned}
    \right.
\end{equation*}
which coincides with the classical Hele-Shaw problem whenever $\Phi$ and $G$ are identically equal to zero.

\begin{theorem}[Uniqueness of the limit solution]
There exists at most one distributional solution such that for all $T>0$ the couple $(n_\infty, p_\infty) \in L^\infty(Q_T)\times L^2(0,T; H^1(\Omega))$ is a solution to system \eqref{eq:limitproblem}.
\end{theorem}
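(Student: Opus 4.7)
The argument proceeds by Hilbert's duality method, tailored to the free-boundary structure imposed by the complementarity relation $p_\infty(1-n_\infty)=0$. Consider two distributional solutions $(n_i, p_i)$, $i = 1, 2$, sharing the common initial datum, and set $N := n_1 - n_2$ and $P := p_1 - p_2$. Linearising the reaction term via $G(p_1) - G(p_2) = \beta P$ with $\beta := \int_0^1 G'(\lambda p_1 + (1-\lambda) p_2) \, d\lambda$, one has $\beta \le -\alpha < 0$ by \eqref{eq:assptn_growth}, and the difference satisfies, in $\mathcal{D}'(Q_T)$,
\begin{equation*}
\partial_t N - \Delta P - \nabla \cdot (N \nabla \Phi) - G(p_1) N - n_2 \beta P = 0, \qquad N(\cdot, 0) = 0.
\end{equation*}
The constraints $0 \le n_i \le 1$ together with $p_i(1-n_i) = 0$ yield $PN \ge 0$ almost everywhere, and consequently $a := P/N$ is a well-defined nonnegative measurable function on $\{N \ne 0\}$, extended by zero elsewhere. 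The identity $P = aN$ holds except on the residual set $\mathcal{R} := \{n_1 = n_2 = 1\} \cap \{P \ne 0\}$, where $N \equiv 0$ reduces the equation to the elliptic identity $\Delta P + n_2 \beta P = 0$ with vanishing trace on $\partial \mathcal{R}$, inherited from the complementarity relation.

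Given any $\psi \in C_c^\infty(Q_T)$, I consider the regularised backward dual problem
\begin{equation*}
-\partial_t \varphi_\epsilon - a_\epsilon \Delta \varphi_\epsilon - (G(p_1) + a_\epsilon n_2 \beta) \varphi_\epsilon + \nabla \Phi \cdot \nabla \varphi_\epsilon = \psi, \qquad \varphi_\epsilon(\cdot, T) = 0,
\end{equation*}
where $a_\epsilon := (a \ast \rho_\epsilon) + \epsilon$ is bounded and uniformly positive, rendering the equation uniformly parabolic after the usual time reversal. Classical parabolic theory furnishes a smooth solution $\varphi_\epsilon$; testing against $-\Delta \varphi_\epsilon$ and exploiting the dissipative sign $\beta \le -\alpha < 0$ delivers bounds on $\Delta \varphi_\epsilon$ in $L^2(Q_T)$ and on $\varphi_\epsilon$ in $L^\infty(Q_T)$ that are uniform in $\epsilon$.

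Testing the equation for $N$ against $\varphi_\epsilon$ and substituting from the dual equation produces the identity
\begin{equation*}
\iint_{Q_T} N \psi = \iint_{Q_T} (P - a_\epsilon N) \bigl[\Delta \varphi_\epsilon + n_2 \beta \varphi_\epsilon \bigr].
\end{equation*}
Splitting $P - a_\epsilon N = (a - a_\epsilon) N + (P - aN)$, the first contribution vanishes as $\epsilon \to 0$ by dominated convergence (using standard mollifier properties together with the uniform $L^2$-control on $\Delta \varphi_\epsilon$), while the second, supported on $\mathcal{R}$, is annihilated by integrating by parts and invoking the elliptic identity established above. Hence $\iint N \psi = 0$ for every $\psi$, so $N \equiv 0$. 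Once $n_1 = n_2$, the equation for $P$ reduces to $\Delta P + n \beta P = 0$ on the saturated region under homogeneous Dirichlet data, and the strict negativity of $\beta$ forces $P \equiv 0$ by a direct $L^2$-energy estimate. The principal obstacle throughout is the potential blow-up of $a$ on $\mathcal{R}$, which invalidates the formal substitution $P = aN$; the joint use of mollified duality and of the elliptic identity satisfied by $P$ on the saturated region is precisely what overcomes this singularity.
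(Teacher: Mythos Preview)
Your overall strategy---Hilbert duality, regularised backward problem, remainder estimates---matches the paper's, but the concrete execution has a gap centred on your choice of coefficient $a := P/N$. Nothing prevents $a$ from being unbounded: if $p_1>0=p_2$ then $n_1=1$ and $n_2$ may be arbitrarily close to $1$, so $P/N$ can blow up. Consequently $a_\epsilon = a\ast\rho_\epsilon + \epsilon$ is in general neither bounded nor convergent to $a$ in $L^2$, and the regularised dual problem does not have uniformly bounded diffusion. The energy estimate you sketch (testing against $-\Delta\varphi_\epsilon$) then yields at best a bound on $\|a_\epsilon^{1/2}\Delta\varphi_\epsilon\|_{L^2}$, not on $\|\Delta\varphi_\epsilon\|_{L^2}$ itself, because $a_\epsilon$ is only bounded below by $\epsilon$. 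Your remainder term $\iint (a-a_\epsilon)N\,[\Delta\varphi_\epsilon + n_2\beta\varphi_\epsilon]$ therefore does not vanish by dominated convergence: you would need $(a-a_\epsilon)N\to 0$ in $L^2$ paired with a uniform $L^2$ bound on $\Delta\varphi_\epsilon$, and neither is available.

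A second, independent issue is the treatment of the residual set $\mathcal R$. This is merely a measurable set; asserting that ``the equation reduces to $\Delta P + n_2\beta P = 0$ on $\mathcal R$ with vanishing trace on $\partial\mathcal R$'' and then ``integrating by parts'' over it is formal---no such trace or Green's formula is available. The paper avoids both difficulties by writing $N=\mathcal A\,\mathcal Z$ and $P=\mathcal B\,\mathcal Z$ with $\mathcal Z:=N+P$ and $\mathcal A,\mathcal B\in[0,1]$ (the sign condition $PN\ge 0$ you correctly identify is precisely what makes this work). The regularised coefficients $\mathcal A_k,\mathcal B_k$ then lie in $[1/k,1]$, the identities $N=\mathcal A\mathcal Z$, $P=\mathcal B\mathcal Z$ hold \emph{everywhere} (so there is no residual set), only the weighted estimate $\|(\mathcal B_k/\mathcal A_k)^{1/2}(\Delta\psi_k-\mathcal C_k\psi_k)\|_{L^2}\le C$ is needed, and the remainder terms are controlled by $\|\mathcal A-\mathcal A_k\|_{L^2},\|\mathcal B-\mathcal B_k\|_{L^2}\le 1/k$ against the $k^{1/2}$ coming from the weight. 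Your proof can be repaired by adopting this splitting.
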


The rest of the paper is organised as follows. In Section \ref{sec:apriori} we present straigh-forward a priori estimates necessary to derive more refined bounds on the pressure. The latter are proven in Section \ref{sec:strongerest}. This includes both the $L^3$-version of the Aronson-B\'enilan estimate as well as an $L^4$-space-time estimate on the pressure gradient.
Building on the estimates derived in the previous sections, Section \ref{sec:incomp_limit} is dedicated to the rigorous limit process in the pressure equation and to obtaining the complementarity relation. In the subsequent section, Section \ref{sec:uniq_limit}, we then proceed to proving the uniqueness of solutions to the complementarity relation.

\section{A Priori Estimates}
\label{sec:apriori}
We state some a priori estimates on the main quantities and their derivatives, that we need to obtain the main result of the paper. 
\begin{lemma}[A priori estimates]\label{lemma:aprioriestimates}
For any $T>0$, there exists a bounded domain $\Omega\subset \R^d$ such that the supports of $\n(\cdot, t), \p(\cdot, t)$ are contained in $\Omega$ for any $t\in[0,T]$, uniformly in $\gamma$. Moreover, the following estimates hold uniformly in $\gamma$:
\begin{enumerate}[(i)]
    \setlength\itemsep{0.7em}
    \item $\n,\p \in L^{\infty}(0,T;L^\infty(\Omega))$,\\[-0.9em]
    \item $\partial_i \n, \partial_t \n \in L^{\infty}(0,T;L^1(\Omega))$,  for $i=1,\dots,d$,\\[-0.9em]
    \item $\partial_i \p, \partial_t \p \in L^{1}((0,T)\times\Omega)$, for $i=1,\dots,d$,\\[-0.9em]
    \item $\grad \p \in L^2(0,T;L^2(\Omega))$.
\end{enumerate}
\end{lemma}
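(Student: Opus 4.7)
The plan is to follow the classical strategy for porous-medium-type equations with drift and source, propagating each estimate uniformly in $\gamma$. First, to establish the finite speed of propagation (and hence a uniform compact support $\Omega$), I will compare with a Barenblatt-type supersolution built from the $L^\infty$-bounds on $\nabla \Phi$, $G$ and on $\p^0 \leq p_M$; the support of $\n(\cdot, t)$ then grows at a bounded rate, so $\supp \n(\cdot, t) \subset \Omega$ for some bounded $\Omega \subset \R^d$ independent of $\gamma$. The $L^\infty$-bound on $\p$ (and hence on $\n$) will next follow from the comparison principle for \eqref{eq:p}: picking $\bar P \geq p_M$ large enough that $G(\bar P) + \|\Delta \Phi\|_{L^\infty(\Omega \times (0,T))} < 0$, which is possible by (A-$G$) and (A1-$\Phi$), makes $\bar P$ a stationary supersolution, giving $\p \leq \bar P$.

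For the $L^2$-bound on $\nabla \p$, I will multiply \eqref{eq:p} by $1/\gamma$, integrate over $\R^d$, and use compact support to integrate by parts. The leading identity $\int \p \Delta \p = -\int |\nabla \p|^2$ survives unweighted, while the remaining terms carry a $1/\gamma$ prefactor; after integrating in time, applying Young's inequality to the stray $\int \nabla \p \cdot \nabla \Phi$ contribution, and invoking the uniform bounds on $\p$, $\|\Delta \Phi\|_{L^\infty}$ and $\|G\|_{L^\infty}$, I obtain $\|\nabla \p\|_{L^2(Q_T)} \leq C$. The $L^1$-bound on $\nabla \p$ is then immediate by Cauchy--Schwarz since $|\Omega \times (0,T)|<\infty$.

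For the BV estimate on $\n$, I will invoke the standard $L^1$-contraction principle applied to the difference $\n(\cdot + h e_i, t) - \n(\cdot, t)$: both solve \eqref{eq:n} up to the shifted drift $\Phi(\cdot + h e_i, t)$, whose cost is controlled via Gronwall by $D^2 \Phi \in L^\infty_{\mathrm{loc}}$ and the Lipschitz character of $G$. For $\partial_t \n \in L^\infty(0,T; L^1)$, I will evaluate \eqref{eq:n} at $t=0$ in the form $\partial_t \n = \tfrac{\gamma}{\gamma+1}\Delta (\n \p) + \nabla\cdot(\n \nabla \Phi) + \n G(\p)$, whose $L^1$-norm is uniformly bounded thanks to (A3-$\n^0$) and (A1-$\Phi$); then $w := \partial_t \n$ satisfies the linearised equation
\begin{equation*}
    \partial_t w = \gamma \Delta(\p w) + \nabla\cdot(w \nabla \Phi) + \nabla\cdot(\n \nabla \partial_t \Phi) + \bigl( G(\p) + \gamma \p\, G'(\p)\bigr) w,
\end{equation*}
and Kato's inequality, combined with the favourable sign $\gamma \p\, G'(\p) \leq 0$ provided by (A-$G$), closes a Gronwall estimate for $\|w(t)\|_{L^1}$.

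The hardest step will be $\partial_t \p \in L^1$: the naive pointwise identity $\partial_t \p = \gamma \n^{\gamma-1}\partial_t \n$ carries the prefactor $\gamma \n^{\gamma-1}$, which is \emph{not} uniformly bounded in $\gamma$, so one cannot simply reduce to $\partial_t \n \in L^1$. My strategy will be to control the negative part $(\partial_t \p)_-$ via a one-sided Aronson--B\'enilan-type lower bound on $\Delta \p + \Delta \Phi + G(\p)$, which starts from the initial-data control $\||\Delta p^0|_-\|_{L^2} \leq C$ in (A3-$\n^0$) and is propagated in time by the mechanism to be developed in Section \ref{sec:strongerest}. The positive part is then recovered from $\int_0^T \int_\Omega \partial_t \p \, \dx{x}\,\dx{t} = \int_\Omega (\p(T) - \p^0)\, \dx{x}$, which is bounded uniformly in $\gamma$ by (i), and the uniform $L^1$-bound follows.
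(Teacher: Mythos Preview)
Your proposal is sound for items (i), (ii) and (iv), and largely parallels the paper's arguments there. The genuine gap is in your route to $\partial_t \p \in L^1(\Omega_T)$.

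First, the forward reference to ``the mechanism to be developed in Section~\ref{sec:strongerest}'' is circular: in that section the $L^3$-Aronson--B\'enilan estimate (Lemma~\ref{lemma:ABL3}) relies on the $L^4$-bound for $\nabla \p$ (Lemma~\ref{lemma:L4}), and the proof of the latter explicitly uses $\partial_t \p \in L^1(\Omega_T)$ to handle the term $\iint \Delta\Phi\,\partial_t \p$. Second, even if you could obtain an $L^\infty$-type one-sided bound $\Delta \p + \Delta\Phi + G(\p)\geq -C$ independently (as in the reference \cite{KZ2020} quoted in the introduction), inserting it into Eq.~\eqref{eq:p} gives only $(\partial_t \p)_- \leq C\gamma\,\p + |\nabla \p|\,|\nabla(\p+\Phi)|$, and the prefactor $\gamma$ prevents any uniform $L^1$-control; the classical $-C/(\gamma t)$ decay of the pure porous-medium case is precisely what fails to survive the drift and reaction terms.

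The paper's argument avoids this entirely and extracts $\partial_t \p \in L^1$ directly from your own step for $\partial_t \n$. In the Kato inequality for $|\partial_t \n|$, the term you label ``favourable sign'' is $\gamma\p\,G'(\p)\,|\partial_t \n| = \n\,G'(\p)\,|\partial_t \p|$; rather than merely dropping it, one keeps it on the left and, using $G'\leq -\alpha$, obtains the auxiliary bound $\iint_{\Omega_T}\n\,|\partial_t \p|\,\dx{x}\dx{t}\leq C(T)$ alongside $\partial_t \n\in L^\infty(0,T;L^1)$. Then split the domain: on $\{\n\leq 1/2\}$ one has $|\partial_t \p|=\gamma \n^{\gamma-1}|\partial_t \n|\leq \gamma 2^{1-\gamma}|\partial_t \n|$, uniformly bounded since $\gamma 2^{1-\gamma}\to 0$; on $\{\n>1/2\}$ one has $|\partial_t \p|\leq 2\,\n\,|\partial_t \p|$, controlled by the new auxiliary bound. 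The same splitting handles $\partial_i \p$ via the analogous term $\n\,G'(\p)\,|\partial_i \p|$ in the spatial Kato inequality.
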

\begin{proof}

Thanks to the comparison principle, from Eq. \eqref{eq:n} we immediately find $\n \geq 0$ and, as a consequence, $\p\geq 0$. In order to establish uniform essential bounds, we construct a super solution. To this end we define 
\begin{equation*}
    \Pi(x,t):= C \left| R(t) - \frac{|x|^2}{2}\right|_+
\end{equation*}
where $C$ is a positive constant that satisfies
\begin{equation}\label{eq:C} 
    C \geq \frac2d (G(0)+\|\Delta \Phi\|_{\infty}),
\end{equation}
and we take $R(t)$ such that
\begin{equation}\label{eq: R'}
        R'(t)\geq (2 C + 1) R(t) + \frac{\|\nabla \Phi\|_\infty}{2}.
\end{equation}
From Eq. \eqref{eq:p} and the assumption on the growth term \eqref{eq:assptn_growth}, we know that $\p$ satisfies
\begin{equation*}
    \partialt \p - |\nabla \p|^2 - \nabla \p \cdot \nabla \Phi - \gamma \p (\Delta \p + G(0) + \|\Delta \Phi\|_{\infty}) \leq 0.
\end{equation*}
Let us show that $\Pi(x,t)$ is a super-solution to this differential inequality. We have
\begin{align*}
    \partialt \Pi &= C R'(t) \mathds{1}_{\left\{R(t)\geq \frac{|x|^2}{2}\right\}},
\end{align*}
and
\begin{align*}
    \nabla \Pi &= - C x \mathds{1}_{\left\{R(t)\geq \frac{|x|^2}{2}\right\}},
\end{align*}
as well as
\begin{align*}
    \Delta \Pi &= - C d \mathds{1}_{\left\{R(t)\geq \frac{|x|^2}{2}\right\}} - C |x| \delta_{\left\{R(t)= \frac{|x|^2}{2}\right\}}.
\end{align*}
Using Eq. \eqref{eq:C} in conjunction with Eq. \eqref{eq: R'} we get
\begin{align}
\label{eq:Phi}
\begin{split}
    \partialt\Pi - |\nabla\Pi|^2 - &\nabla\Pi\cdot\nabla\Phi - \gamma \Pi (\Delta \Pi + G(0) + \|\Delta \Phi\|_\infty)\\[0.7em]
    \geq &C R'(t) \mathds{1}_{\left\{R(t)\geq \frac{|x|^2}{2}\right\}} - C^2 |x|^2 \mathds{1}_{\left\{R(t)\geq \frac{|x|^2}{2}\right\}} + C x \cdot\nabla\Phi \mathds{1}_{\left\{R(t)\geq \frac{|x|^2}{2}\right\}} + \gamma C \Pi \frac d 2\\[0.7em]
    \geq & \left(R'(t) - 2C R(t) -\frac{|x|^2}{2} - \frac{\|\nabla \Phi\|_\infty}{2}\right)\mathds{1}_{\left\{R(t)\geq \frac{|x|^2}{2}\right\}}\\[0.7em]
    \geq &0.
    \end{split}
\end{align}
Taking $R(0)$ such that $K\subset B_{\sqrt{2R(0)}}$ and $C$ large enough, by the assumption on the initial data \eqref{eq:assptn_init2} we have $\p^0 \leq \Pi(0)$. Then, this implies that $\p(t) \leq \Pi(t)$ for all positive times by comparison. Let us show the argument for the sake of completeness. 

Setting $N(\Pi)=\Pi^{1 /\gamma}$, and multiplying Eq. \eqref{eq:Phi} by $N'(\Pi)$ we obtain
\begin{equation*}
    \frac{\partial N}{\partial t} - N'(\Pi) |\nabla \Pi|^2 - N'(\Pi) \nabla \Pi \cdot \nabla \Phi  - \gamma N'(\Pi) \Pi\Delta \Pi \geq \gamma N'(\Pi) \Pi (G(0)+\|\Delta \Phi\|_\infty),
\end{equation*}
whence
\begin{equation*}
    \frac{\partial N}{\partial t} - \nabla\cdot (N \nabla\Pi)- \nabla N \cdot \nabla \Phi \geq N (G(0)+\|\Delta \Phi\|_\infty).
\end{equation*}
Since, by Eq. \eqref{eq:n}, we know that $\n$ is a sub-solution to the same equation,   we have $\n(t) \leq N(t)$ for all $t>0$, by the comparison principle. Therefore, we conclude that $\p(t) \leq \Pi(t)$ for all positive times. We take $\Omega \subset\R^d$ a bounded domain such that $B_{\sqrt{2R(T)}}\subset\Omega$ and then, by the definition of $\Pi$, we infer that
\begin{equation*}
    \supp(\p(t)) \subset \Omega,
\end{equation*}
for all $t \in [0,T]$ and any $\gamma>1$.
As consequence, both $\n$ and $\p$ are uniformly bounded in $L^\infty(\Omega_T),$ where $\Omega_T:=\Omega\times(0,T)$.

Now we prove the $BV$-estimates on the density. Differentiating Eq. \eqref{eq:n} with respect to the $i$-th component of the space variable, $x_i$, and multiplying by $\sign(\partial_{x_i} \n)$ we get
\begin{align*}
    \ddt\!\int_{\Omega}\left|\frac{\partial \n}{\partial{x_i}}\right|\dx{x} \leq &\int_{\Omega} \!\!\gamma \Delta \left(\n^\gamma  \left|\frac{\partial \n}{\partial x_i}\right|\right) \dx{x} \!+\! \int_{\Omega} \!\!\div \!\prt*{\!\n \grad \prt*{\!\frac{\partial\Phi}{\partial{x_i}}\!}\!\!} \ \sign\prt*{\frac{\partial \n}{\partial{x_i}}}\dx{x} + G(0)\int_{\Omega} \left|\frac{\partial \n}{\partial{x_i}}\right|\dx{x}\\[1em]
    \leq &\sum_{j=1}^d \int_{\Omega} \left|\frac{\partial \n}{\partial{x_j}}\right| \left|\frac{\partial^2\Phi}{\partial{x_i \partial x_j}}\right|\dx{x} + \sum_{j=1}^d \int_{\Omega} \n \left|\frac{\partial^3\Phi}{\partial{x_i} \partial {x^2_j}}\right|\dx{x} + G(0)\int_{\Omega} \left|\frac{\partial \n}{\partial{x_i}}\right|\dx{x},
\end{align*}
for $i=1,\dots,d$.
We sum the inequalities over all $i=1,\dots,d$, and  obtain
\begin{align*}
    \ddt \sum_{i=1}^d \int_{\Omega}\left|\frac{\partial \n}{\partial{x_i}}\right|\dx{x} \leq C \sum_{i=1}^d \int_{\Omega} \left|\frac{\partial \n}{\partial{x_i}}\right|\dx{x} + C,
\end{align*}
where the constants depend on the $L^\infty$-norm of $G$ and the assumptions on the potential $\Phi$, \cf Eqs. (\ref{eq:assptn_growth}, \ref{eq:assptn_phi_BV}). Using Gronwall's lemma we conclude
\begin{equation*} 
    \sum_{i=1}^d \int_{\Omega}\left|\frac{\partial \n}{\partial{x_i}}\right|\dx{x} \leq Ce^{C t}  \sum_{i=1}^d \int_{\Omega}\left|\frac{\partial \n^0}{\partial{x_i}}\right|\dx{x} \leq C(T),
\end{equation*}
where, in the last inequality, we have used  the uniform $BV$-bounds on the initial data, \cf assumption \eqref{eq:assptn_init1}. 

Following the same line of reasoning for the time derivatives we obtain
\begin{align}
\label{dtn}
\begin{split}
    \partialt{} \left|\partialt \n\right| \leq
    &\gamma\Lap\prt*{\p \left|\partialt \n\right|} + \div \prt*{\left|\partialt \n\right| \grad \Phi} + \sign\prt*{\partialt \n}\nabla\cdot\prt*{\n \nabla \prt*{\partialt \Phi}}\\[0.7em]
    &+ \left|\partialt \n\right| G(\p) + \n G'(\p)\left|\partialt \p\right|,
\end{split}
\end{align}
due to the fact that $\sign(\partial_t \p)= \sign(\partial_t \n)$. An integration in space yields
\begin{equation*}
    \ddt\int_{\Omega}\left|\partialt \n\right|\dx{x} \leq G(0)\int_{\Omega} \left|\partialt \n\right|\dx{x}+\underbrace{\int_{\Omega}\left|\nabla\cdot\prt*{\n \nabla \prt*{\partialt \Phi}}\right|\dx{x}}_{\curlyI},
\end{equation*}
where we used that $G'<-\alpha$, \cf Eq. \eqref{eq:assptn_growth}. We can estimate the term $\curlyI$ as follows
\begin{align*}
    \curlyI =& \int_{\Omega}\left|\nabla\n \cdot \nabla \prt*{\partialt \Phi} + n \Delta\prt*{\partialt\Phi}\right|\dx{x}\\[0.7em]
    \leq& \int_{\Omega} \left|\nabla \n \cdot \nabla \prt*{\partialt\Phi}\right| \dx{x} + \int_{\Omega} \left| n \Delta\prt*{\partialt \Phi}\right|\dx{x}\\[0.7em]
    \leq& \left\|\nabla\prt*{\partialt \Phi}(\cdot, t)\right\|_{L^\infty(\Omega)} \|\nabla \n\|_{L^\infty(0,T;L^1(\Omega))} + n_H \left\|\Delta\prt*{\partialt \Phi}(\cdot, t)\right\|_{L^1(\Omega)} \\[0.7em]
    \leq&  C \left\|\nabla\prt*{\partialt \Phi}(\cdot, t)\right\|_{L^\infty(\Omega)} +C \left\|\Delta\prt*{\partialt \Phi}(\cdot, t)\right\|_{L^1(\Omega)},
\end{align*}
where we have used the $BV$-space regularity of $\n$ from before. Hence, we obtain
\begin{equation*}
    \ddt\int_{\Omega}\left|\partialt \n\right|\dx{x} \leq G(0)\int_{\Omega} \left|\partialt \n\right|\dx{x} +C \left\|\nabla\prt*{\partialt \Phi}(\cdot, t)\right\|_{L^\infty(\Omega)} + C\left\|\Delta\prt*{\partialt \Phi}(\cdot, t)\right\|_{L^1(\Omega)}.
\end{equation*}
By assumption \eqref{eq:assptn_phi_BV} we know that $\left\|\nabla\prt*{\partial_t \Phi}(\cdot, t)\right\|_{L^\infty(\Omega)}$ and $\left\|\Delta\prt*{\partial_t \Phi}(\cdot, t)\right\|_{L^1(\Omega)}$ are $L^1$-integrable in time. Using Gronwall's lemma, we conclude
\begin{align}
\label{eq:L1n}
\begin{split}
    \left\|\partialt \n(t)\right\|_{L^1(\Omega)} 
    &\leq e^{G(0) t} \left\|\prt*{\partialt \n}_0\right\|_{L^1(\Omega)} \\[0.7em]
    &\qquad + \int_0^t C\left(\left\|\nabla\prt*{\partialt \Phi}(\cdot, t)\right\|_{L^\infty(\Omega)} + \left\|\Delta\prt*{\partialt \Phi}(s, \cdot)\right\|_{L^1(\Omega)}\right) e^{G(0)(t-s)}\dx{s}\\[0.7em]
    & \leq C(T),
\end{split}
\end{align}
for a.e. $t\in (0,T)$, \ie, $\partial_t \n \in L^{\infty}(0,T; L^1(\Omega))$. Let us stress that assumptions \eqref{eq:assptn_init1} and \eqref{eq:assptn_init3} imply the initial bound $\left\|\prt*{\partial_t \n}_0\right\|_{L^1(\Omega)} \leq C$.

Before establishing the $BV$-bounds on the pressure, let us notice that integrating Eq. \eqref{dtn} in space and time, we have
\begin{equation*}
    \left\|\partialt \n(\cdot, t)\right\|_{L^1(\Omega)} + \min_{\p \in [0,p_M]}|G'(\p)| \int_0^t\int_{\Omega}\n \left|\partialt \p\right|\dx{x}\dx{t} \leq C(T),
\end{equation*}
thanks to Eq. \eqref{eq:L1n}. Then, it holds
\begin{equation*}
	\left\|\partialt \p\right\|_{L^1(\OmegaT)}\leq \iint_{\OmegaT \cap \{\n\leq 1/2\}}\gamma \n^{\gamma-1}\left|\partialt \n\right|\dx{x}\dx{t} + 2 \iint_{\OmegaT \cap \{\n > 1/2\}} \n \left|\partialt \p\right|\dx{x}\dx{t}\leq C(T).
\end{equation*}
The same argument can be used for the space derivatives of $\p$ and it goes through without major changes.
	 
We can actually gain more information on the pressure gradient, by integrating Eq. \eqref{eq:p} in space, \ie, 
\begin{align*}
     \int_{\Omega} \partialt {\p}\dx{x}  = \gamma\int_{\Omega}\p \prt*{\Lap \prt*{\p+\Phi} + G(\p)}\dx{x} +\int_{\Omega} \nabla \p \cdot\grad (\p + \Phi)\dx{x}.
\end{align*}
Integration by parts yields
\begin{align*}
     \int_{\Omega} \partialt \p\dx{x} \leq(1- \gamma)\int_{\Omega}|\grad \p|^2\dx{x}  +\gamma \int_{\Omega} \p G(\p)\dx{x} +(1-\gamma)\int_{\Omega} \nabla \p \cdot\grad \Phi\dx{x},
\end{align*}
and using Young's inequality we obtain
\begin{align*}
     \frac{\gamma -1}{2}\iint_{\OmegaT}|\grad \p(t)|^2\dx{x}\dx{t}  \leq \|\p^0\|_{L^1(\Omega)}  + \frac{(\gamma-1)}{2}  \iint_{\OmegaT} |\nabla \Phi|^2\dx{x}\dx{t}+ \gamma \ \iint_{\OmegaT} |\p G(\p)|\dx{x}\dx{t}.
\end{align*}
Dividing by $(\gamma -1)$ we finally get
\begin{align*}
    \iint_{\OmegaT}|\grad \p|^2 \dx{x}\dx{t} \leq  C(T),
\end{align*}
which concludes the proof.
\end{proof}
\section{Stronger bounds on $\p$}\label{sec:strongerest} 
This section is dedicated to establishing more refined estimates on the pressure, \cf Lemma \ref{lemma:L4} and Lemma \ref{lemma:ABL3}. Upon obtaining those estimates we will then be able to proceed to proving the strong compactness of the pressure gradient, \cf Lemma \ref{lemma:strongconv}, which is crucial in the overall endeavour of establishing the incompressible limit.

The first result on the pressure's regularity is the $L^4$-boundedness of its gradient. This bound was already proved in \cite{MePeQu}, although, the authors use the $L^\infty$-version of the Aronson-B\'enilan estimate. Here we adapted the method used in \cite{DP}, where a new method was employed, that does not require any estimate on $\Delta \p$. 
\begin{lemma}[$L^4$-estimate of the pressure gradient.]\label{lemma:L4}
Given $T>0$, there exists a positive constant $C$, independent of $\gamma$, such that
\begin{equation*}  
   \iint_\OmegaT \p \sum_{i,j=1}^d \left|\frac{\partial^2  \p}{\partial x_i \partial x_j}\right|^2\dx{x}\dx{t} + (\gamma-1)\iint_\OmegaT  \p |\Delta \p+\Delta \Phi +G|^2\dx{x}\dx{t}\leq C(T),
\end{equation*}
as well as
\begin{equation*}
      \iint_\OmegaT |\nabla \p|^4 \dx{x}\dx{t} \leq C(T).
\end{equation*}
\end{lemma}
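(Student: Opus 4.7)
My plan, adapted from the approach of \cite{DP} and \cite{MePeQu}, is to extract second-order information on $\p$ by testing the pressure equation \eqref{eq:p} against $-A$, where $A := \Lap\p + \Lap\Phi + G(\p)$, and then to promote the resulting bound on $\iint \p A^2$ to one on $\iint \p |D^2 \p|^2$ via the Bochner--Weitzenbock identity. Multiplying Eq. \eqref{eq:p} by $-A$ and integrating over $\OmegaT$ produces the identity
\begin{equation*}
\gamma \iint_\OmegaT \p A^2 \dx{x}\dx{t} = -\iint_\OmegaT A\,\partialt{\p}\dx{x}\dx{t} - \iint_\OmegaT A\,\grad \p \cdot \grad(\p + \Phi)\dx{x}\dx{t},
\end{equation*}
whose left-hand side carries a $\gamma$-prefactor, so that a uniform-in-$\gamma$ bound on the right immediately yields the $(\gamma-1)\iint_\OmegaT \p A^2 \leq C$ half of the claim.

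The right-hand side is handled term by term. Splitting $-\iint A\,\partialt\p = -\iint \Lap\p\,\partialt\p - \iint \Lap\Phi\,\partialt\p - \iint G(\p)\,\partialt\p$, the first contribution integrates by parts in space to $\tfrac{1}{2}\int|\grad\p(T)|^2 - \tfrac{1}{2}\int|\grad\p^0|^2$, controlled by \eqref{eq:assptn_init3}. The second is integrated by parts in time, producing $[\int \p\Lap\Phi]_0^T - \iint \p\Lap(\partialt\Phi)$, both bounded by \eqref{eq:assptn_phi_BV} and Lemma \ref{lemma:aprioriestimates}. The third equals $-[\int \tilde G(\p)]_0^T$ with $\tilde G'=G$, bounded by $\|\p\|_\infty$. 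The convective integral $-\iint A\,\grad\p\cdot\grad(\p + \Phi)$ is expanded into four pieces; the ones involving only $\Lap\Phi + G$ are controlled by Young together with the $L^2$-bound on $\grad\p$ from Lemma \ref{lemma:aprioriestimates} and the regularity of $\Phi$, while the ones involving $\Lap\p$ (namely $\iint \Lap\p \,|\grad\p|^2$ and $\iint \Lap\p\, \grad\p \cdot \grad\Phi$) are reserved for the Bochner step.

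To recover $\iint \p |D^2 \p|^2 \leq C$, I would invoke the Bochner--Weitzenbock identity which, after multiplication by $\p$ and integration by parts, yields
\begin{equation*}
\int_\Omega \p|D^2 \p|^2 \dx{x} = \int_\Omega \p (\Lap\p)^2 \dx{x} + \frac{3}{2} \int_\Omega |\grad\p|^2 \Lap\p \dx{x}.
\end{equation*}
Expanding $\p A^2 = \p(\Lap\p)^2 + 2\p\Lap\p(\Lap\Phi + G) + \p(\Lap\Phi + G)^2$ and applying Young's inequality shows that $\iint \p(\Lap\p)^2$ is dominated by $\iint \p A^2$ plus controlled remainders; the remaining $\iint |\grad\p|^2 \Lap\p = -2\iint \grad\p \cdot D^2\p \, \grad\p$ must be absorbed into $\iint \p |D^2\p|^2$, together with its twin $-\iint \Lap\p\,|\grad\p|^2 = 2\iint \grad\p\cdot D^2\p\,\grad\p$ coming from the convective contribution. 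I expect this absorption to be the main analytical obstacle: the natural weighted Young $|\grad\p \cdot D^2\p\,\grad\p| \leq \epsilon\, \p |D^2\p|^2 + C_\epsilon |\grad\p|^4/\p$ produces a singular weight $1/\p$, and the fix is to couple the argument with the $L^4$-estimate on the gradient below, closing the two bounds simultaneously rather than sequentially.

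For the $L^4$-bound on the gradient, a single integration by parts gives
\begin{equation*}
\iint_\OmegaT |\grad\p|^4 \dx{x}\dx{t} = -\iint_\OmegaT \p\,\div(|\grad\p|^2 \grad\p)\dx{x}\dx{t} = -\iint_\OmegaT \p|\grad\p|^2 \Lap\p \dx{x}\dx{t} - 2\iint_\OmegaT \p\,\grad\p \cdot D^2\p\,\grad\p\dx{x}\dx{t}.
\end{equation*}
Splitting each integrand as $\sqrt{\p}\,|D^2\p| \cdot \sqrt{\p}\,|\grad\p|^2$ and applying Cauchy--Schwarz together with $\|\p\|_\infty \leq C$ from Lemma \ref{lemma:aprioriestimates} produces
\begin{equation*}
\iint_\OmegaT |\grad\p|^4\dx{x}\dx{t} \leq C \|\p\|_\infty^{1/2} \prt*{\iint_\OmegaT \p |D^2\p|^2\dx{x}\dx{t}}^{1/2} \prt*{\iint_\OmegaT |\grad\p|^4\dx{x}\dx{t}}^{1/2},
\end{equation*}
which upon absorption of the last factor yields $\iint |\grad\p|^4 \leq C \|\p\|_\infty \iint \p |D^2\p|^2$, completing the argument in tandem with the first estimate. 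The delicate step throughout is the careful bookkeeping of $\gamma$-dependent prefactors ensuring that all constants remain uniform as $\gamma \to \infty$.
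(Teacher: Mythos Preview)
Your overall strategy — test Eq.~\eqref{eq:p} against $-A$ with $A=\Lap\p+\Lap\Phi+G$, appeal to a Bochner--Weitzenb\"ock identity, and derive the $L^4$-bound from $\iint\p|D^2\p|^2$ — coincides with the paper's. But the mechanism you propose for closing is where the proof breaks.

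You treat the identity $\iint\p|D^2\p|^2 = \iint\p(\Lap\p)^2 + \tfrac{3}{2}\iint|\grad\p|^2\Lap\p$ as a \emph{separate} estimate, bound $\iint\p(\Lap\p)^2$ by $\iint\p A^2$, and then try to absorb the leftover $\iint|\grad\p|^2\Lap\p$ together with its convective ``twin'' via the weighted Young inequality $|\grad\p\cdot D^2\p\,\grad\p|\leq \epsilon\,\p|D^2\p|^2 + C_\epsilon|\grad\p|^4/\p$, hoping to close against the $L^4$-bound. This does not work: the singular weight $1/\p$ is genuinely obstructive, and the $L^4$-inequality $\iint|\grad\p|^4\leq C\iint\p|D^2\p|^2$ gives no control over $\iint|\grad\p|^4/\p$. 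The circular ``simultaneous closing'' you sketch therefore does not terminate.

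The paper's fix is algebraic, not absorptive. The convective term $-\iint|\grad\f|^2\Lap\f$ (with $\f:=\p+\Phi$) is not a residual to be controlled; it is the \emph{source} of the good second-order term. One substitutes the identity
\[
-\iint_{\OmegaT}|\grad\f|^2\Lap\f\,\dx{x}\dx{t}
=\tfrac{2}{3}\iint_{\OmegaT}\f(\Lap\f)^2\,\dx{x}\dx{t}
-\tfrac{2}{3}\iint_{\OmegaT}\f|D^2\f|^2\,\dx{x}\dx{t}
\]
\emph{directly} into the main inequality. Bounding $\iint\f(\Lap\f)^2\leq \iint\f|\Lap\f+G|^2+C$ and handling the $\Phi$-cross-terms (which consume part of the $\tfrac{2}{3}$ coefficient) yields
\[
\gamma\iint_{\OmegaT}\p|\Lap\f+G|^2\dx{x}\dx{t}
+\tfrac{1}{6}\iint_{\OmegaT}\p|D^2\f|^2\dx{x}\dx{t}
\leq \iint_{\OmegaT}\p|\Lap\f+G|^2\dx{x}\dx{t}+C,
\]
which closes immediately for $\gamma>1$ with no singular weight and no reference to the $L^4$-bound. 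The $L^4$-estimate is then proved \emph{sequentially} from $\iint\p|D^2\p|^2\leq C$, exactly as you describe in your final paragraph.

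A secondary point: the paper works throughout with $\f=\p+\Phi$ rather than $\p$ itself. This is not cosmetic; it makes the $\Phi$-interaction terms (your ``reserved'' $-\iint\Lap\p\,\grad\p\cdot\grad\Phi$) tractable by packaging them into the single expression $-\iint|\grad\f|^2\Lap\f$ plus lower-order pieces that are estimated via $D^2\Phi\in L^\infty_{\mathrm{loc}}$. In your $\p$-based formulation these cross-terms would need to be written out and bounded one by one, which you omit.
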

\begin{proof}
We write the equation for the pressure as follows
\begin{equation}
    \label{eq:pressure_rephrased}
    \partialt \p = \gamma \p (\Delta \f +G) + \nabla \p \cdot \nabla \f,
\end{equation}
where $\f:= \p + \Phi$.
We multiply Eq. \eqref{eq:pressure_rephrased}  by $-(\Delta \f +G)$ and integrate in space and time to obtain
\begin{align}\label{eq:total}
\begin{split}
    \int_0^T\ddt \int_\Omega &\frac{|\nabla \p|^2}{2}\dx{x}\dx{t} -\iint_\OmegaT \Delta \Phi \frac{\partial \p}{\partial t} \dx{x}\dx{t} - \iint_\OmegaT G \partialt \p \dx{x}\dx{t}\\[0.7em]
    &=\underbrace{- \iint_\OmegaT\nabla \p \cdot \nabla \f (\Delta \f +G)\dx{x}\dx{t} }_{\curlyI} - \gamma \iint_\OmegaT \p |\Delta \f + G|^2\dx{x}\dx{t}. 
\end{split}
\end{align}
For convenience, let us define the function $\overline{G}=\overline{G}(\p)=\int_0^{\p} G(q) \dx{q}$. Thus, we have 
$$\partial_t \p \,G(\p)=\partial_t \overline{G}(\p),
$$
and thus
\begin{equation*}
	\iint_\OmegaT \partialt \p \,G(\p)\dx{x}\dx{t} = \int_0^T\ddt \int_\Omega \overline{G}(\p)\dx{x}\dx{t}.
\end{equation*}
Now, we need to estimate the term $\curlyI$ on the right-hand side of Eq. \eqref{eq:total}.
Since $\p=\f-\Phi$ we have
\begin{align*}
    \curlyI=& - \iint_\OmegaT\nabla \p \cdot \nabla \f (\Delta \f +G)\dx{x}\dx{t} \\[0.7em]
    =& - \iint_\OmegaT |\nabla \f|^2 \Delta \f\dx{x}\dx{t} + \iint_\OmegaT\nabla \Phi \cdot \nabla \f \Delta \f\dx{x}\dx{t} - \iint_\OmegaT G \nabla \p\cdot \nabla \f\dx{x}\dx{t} \\[0.7em]
	\leq& - \underbrace{\iint_\OmegaT |\nabla \f|^2 \Delta \f\dx{x}\dx{t}}_{\curlyI_1}  + \underbrace{\iint_\OmegaT \nabla \Phi \cdot \nabla \f \Delta \f \dx{x}\dx{t}}_{\curlyI_2}+ C,
\end{align*}
thanks to the $L^2$-bounds of both $\nabla \p$ and $\nabla \Phi$.
We integrate by parts twice in space the term $\curlyI_1$ and obtain
\begin{align*}
	\curlyI_1 &=\iint_\OmegaT \f \Delta(|\nabla \f|^2)\dx{x}\dx{t}\\[0.7em]
	& = 2 \iint_\OmegaT \f \nabla \f \cdot \nabla (\Delta \f)\dx{x}\dx{t} + 2 \iint_\OmegaT \f \sum_{i,j=1}^d \left|\frac{\partial^2  f}{\partial x_i \partial x_j}\right|^2\dx{x}\dx{t}\\[0.7em]
	&= -2 \iint_\OmegaT\f |\Delta \f|^2\dx{x}\dx{t}- 2\iint_\OmegaT |\nabla \f|^2\Delta \f\dx{x}\dx{t} + 2 \iint_\OmegaT \f \sum_{i,j=1}^d \left|\frac{\partial^2  f}{\partial x_i \partial x_j}\right|^2\dx{x}\dx{t}.
\end{align*}
Hence, we obtain
\begin{equation*}
	-\curlyI_1=\iint_{\OmegaT} |\nabla \f|^2 \Delta \f\dx{x}\dx{t} = \underbrace{\frac{2}{3} \iint_{\OmegaT}\f |\Delta \f|^2\dx{x}\dx{t}}_{\curlyI_{1,1}} - \frac{2}{3} \iint_{\OmegaT}\f \sum_{i,j=1}^d \left|\frac{\partial^2  \f}{\partial x_i \partial x_j}\right|^2\dx{x}\dx{t}.
\end{equation*}
The first integral can be bounded by
\begin{align*}
    \curlyI_{1,1}=\frac 2 3 \iint_\OmegaT \f |\Delta \f|^2\dx{x}\dx{t} \leq& \ \frac 2 3 \iint_\OmegaT  \f |\Delta \f+G|^2\dx{x}\dx{t} - \frac 4 3 \iint_\OmegaT  \f \Delta \f G\dx{x}\dx{t}\\[0.7em]
    &\qquad - \frac 2 3 \iint_\OmegaT \f G^2\dx{x}\dx{t}\\[0.7em]
    \leq& \frac 2 3 \iint_\OmegaT  \f |\Delta \f+G|^2\dx{x}\dx{t} +C,
\end{align*}
since, by integration by parts,
\begin{equation*}
    - \iint_\OmegaT  \f \Delta \f G\dx{x}\dx{t} = \frac{4}{3}  \iint_\OmegaT  |\nabla\f|^2 G\dx{x}\dx{t} +\frac{4}{3}  \iint_\OmegaT G' \nabla\f\cdot\nabla\p\dx{x}\dx{t} \leq C.
\end{equation*}
Thus, we conclude
\begin{equation}\label{eq:I1}
   - \curlyI_1 \leq \frac 2 3 \iint_\OmegaT  \f |\Delta \f+G|^2\dx{x}\dx{t} - \frac{2}{3} \iint_{\OmegaT}\f\sum_{i,j=1}^d \left|\frac{\partial^2  \f}{\partial x_i \partial x_j}\right|^2\dx{x}\dx{t}+C
\end{equation}
Now we compute the term $\curlyI_2$
\begin{align*}
    \curlyI_2=\iint_\OmegaT \nabla \Phi \cdot \nabla \f \Delta \f\dx{x}\dx{t}= &-\iint_\OmegaT \Delta \Phi \Delta \f \f \dx{x}\dx{t} - \iint_\OmegaT \nabla \Phi \cdot \nabla(\Delta \f) \f \dx{x}\dx{t}\\[0.7em]
    = &-\iint_\OmegaT \Delta \Phi \Delta \f \f \dx{x}\dx{t} + \iint_\OmegaT \sum_{i,j=1}^d \frac{\partial^2 \Phi}{\partial x_i \partial x_j}  \frac{\partial^2 \f}{\partial x_i \partial x_j}  \f\dx{x}\dx{t} \\[0.7em]
    &+ \iint_\OmegaT \f \sum_{i,j=1}^d \frac{\partial \Phi}{\partial x_j}  \frac{\partial \f}{\partial x_i} \frac{\partial^2 \f}{\partial x_i \partial x_j}  \dx{x}\dx{t}.
\end{align*}
Let us notice that the last term can be written as
\begin{equation*}
    \iint_\OmegaT \sum_{i,j=1}^d \frac{\partial \Phi}{\partial x_j}  \frac{\partial \f}{\partial x_{i}} \frac{\partial^2 \f}{\partial x_i \partial x_j} \dx{x}\dx{t}= \frac 1 2\iint_\OmegaT \nabla\Phi \cdot \nabla (|\nabla \f|^2)\dx{x}\dx{t}.
\end{equation*}
Hence, using the identity
\begin{equation*}
  \frac 1 2\iint_\OmegaT \nabla\Phi \cdot \nabla (|\nabla \f|^2)\dx{x}\dx{t}=  
    \iint_\OmegaT \nabla \cdot \prt*{ \nabla\Phi |\nabla \f|^2} \dx{x}\dx{t} - \frac 1 2\iint_\OmegaT \Delta\Phi |\nabla \f|^2\dx{x}\dx{t},
\end{equation*}
we finally obtain
\begin{equation*}
    \curlyI_2 = -\underbrace{\frac 1 2\iint_\OmegaT \Delta \Phi|\nabla\f|^2\dx{x}\dx{t}}_{\curlyI_{2,1}} - \underbrace{\iint_\OmegaT \f \Delta \Phi \Delta \f \dx{x}\dx{t}}_{\curlyI_{2,2}} + \underbrace{\iint_\OmegaT\f \sum_{i,j=1}^d \frac{\partial^2 \Phi}{\partial x_i \partial x_j} \frac{\partial^2 \f}{\partial x_i \partial x_j}\dx{x}\dx{t}}_{\curlyI_{2,3}}.
\end{equation*}
The term $\curlyI_{2,1}$ is bounded since $\Delta \Phi\in L^\infty(\OmegaT)$ and $\nabla \f \in L^2(\OmegaT)$. 

We treat the term $\curlyI_{2,2}$ as follows
\begin{align*}
   \curlyI_{2,2} &= -\iint_\OmegaT \f \Delta \Phi \Delta \f\dx{x}\dx{t}\\[0.7em]
   &= -\iint_\OmegaT \f \Delta \Phi (\Delta \f +G) \dx{x}\dx{t}  + \iint_\OmegaT \f \Delta \Phi G\dx{x}\dx{t} \\[0.7em]
   &\leq \frac 1 3 \iint_\OmegaT \f |\Delta \f +G|^2\dx{x}\dx{t} + \frac{3}{4} \iint_\OmegaT \f |\Delta \Phi|^2\dx{x}\dx{t} +C\\[0.7em]
   &\leq \frac 1 3 \iint_\OmegaT  \f |\Delta \f +G|^2\dx{x}\dx{t} + C.
\end{align*}
Using Young's inequality, we bound the last integral
\begin{align*}
    \curlyI_{2,3} &= \iint_\OmegaT \f \sum_{i,j=1}^d \frac{\partial^2  \Phi}{\partial x_i \partial x_j} \frac{\partial^2  f_\gamma}{\partial x_i \partial x_j} \dx{x}\dx{t}\\[0.7em]
    &\leq \ \frac{2}{3}\iint_\OmegaT \Phi \sum_{i,j=1}^d \left|\f \frac{\partial^2 \Phi}{\partial x_i \partial x_j}\right|^2 \dx{x}\dx{t} + \frac 3 8 \iint_\OmegaT \Phi \sum_{i,j=1}^d \left|\frac{\partial^2 \Phi}{\partial x_i \partial x_j} \right|^2\dx{x} \dx{t} \\[0.7em]
    &\qquad + \frac 1 2 \iint_\OmegaT \p \sum_{i,j=1}^d \left|\frac{\partial^2\f}{\partial x_i \partial x_j}\right|^2\dx{x} \dx{t} +\frac 1 2 \iint_\OmegaT \p \sum_{i,j=1}^d \left|\frac{\partial^2 \Phi}{\partial x_i \partial x_j} \right|^2\dx{x}\dx{t}\\[0.7em]
    &\leq  \ \frac{2}{3}\iint_\Omega \Phi \sum_{i,j=1}^d \left|\frac{\partial^2\f}{\partial x_i \partial x_j}\right|^2 \dx{x}\dx{t}
    +\frac 1 2 \iint_\OmegaT \p \sum_{i,j=1}^d\left|\frac{\partial^2\f}{\partial x_i \partial x_j}\right|^2 \dx{x} \dx{t} + C.
\end{align*}
Thus, we finally have
\begin{align}
    \label{eq:I2}
    \begin{split}
    \curlyI_2 &\leq \frac 1 3 \iint_\OmegaT  \f |\Delta \f +G|^2\dx{x}\dx{t} +\frac{2}{3}\iint_\OmegaT \Phi \sum_{i,j=1}^d\left|\frac{\partial^2\f}{\partial x_i \partial x_j}\right|^2 \dx{x}\dx{t}\\[0.7em]
    &\qquad + \frac12 \iint_\OmegaT \p \sum_{i,j=1}^d\left|\frac{\partial^2\f}{\partial x_i \partial x_j}\right|^2 \dx{x}\dx{t} +C.
    \end{split}
\end{align}
Gathering Eq. \eqref{eq:I1} and Eq. \eqref{eq:I2}, we get
\begin{align*}
  \curlyI
  &\leq -\curlyI_1 + \curlyI_2 + C\\[0.7em]
  &\leq \iint_\OmegaT \f |\Delta \f+G|^2\dx{x}\dx{t} -\frac 1 6 \int_\Omega \p \sum_{i,j=1}^d \left|\frac{\partial^2\f}{\partial x_i \partial x_j}\right|^2\dx{x} \dx{t} +C\\[0.7em]
  &=\iint_\OmegaT \Phi |\Delta \f+G|^2\dx{x}\dx{t}+ \iint_\OmegaT \p |\Delta \f+G|^2 \dx{x}\dx{t} -\frac 1 6 \iint_\OmegaT \p \sum_{i,j=1}^d \left|\frac{\partial^2\f}{\partial x_i \partial x_j}\right|^2\dx{x}\dx{t}  +C.
\end{align*}
Since we are considering compactly supported solutions and since $\Phi$ is bounded, we can assume it to be negative, substituting $\Phi$ by $\Phi- \|\Phi\|_{L^\infty}$, in the equations for $\p$. Then, the first integral in the previous equation is negative.

Gathering all the bounds we can write Eq. \eqref{eq:total} as
\begin{align*} 
\begin{split}
  \frac 1 6 \iint_\OmegaT &\p \sum_{i,j=1}^d\left|\frac{\partial^2\f}{\partial x_i \partial x_j}\right|^2\dx{x}\dx{t} +\left(\gamma-1\right) \iint_\Omega  \p |\Delta \f+G|^2\dx{x}\dx{t} \\[0.7em]
  &\leq \int_0^T \ddt \int_\Omega \left( \overline{G}-\frac{|\nabla \p|^2}{2}\right) \dx{x} \dx{t} + \iint_\OmegaT \Delta \Phi \partialt{\p} \dx{x}\dx{t}+ C\\[0.7em]
  &\leq \ C(T),
  \end{split}
\end{align*}
where in the last inequality we used the $L^1$-bound of $\partial_t \p$.
Thus, we have proved the following bound
\begin{align*}
    \frac16  \iint_\OmegaT \p \sum_{i,j=1}^d\left|\frac{\partial^2\f}{\partial x_i \partial x_j}\right|^2\dx{x}\dx{t} +\left(\gamma-1\right) \iint_\OmegaT \p |\Delta \f+G|^2 \dx{x}\dx{t} \leq C(T).
\end{align*}
Finally, thanks to the boundedness of $\partial_{i,j}^2 \Phi$, we have
\begin{align}
\label{eq: pd2p}
\begin{split}
    \iint_\OmegaT &\p \sum_{i,j=1}^d \left|\frac{\partial^2 \p}{\partial x_i \partial x_j}\right|^2 \dx{x}\dx{t}\\[0.7em]
    &\leq 2\iint_\OmegaT \p \sum_{i,j=1}^d\left|\frac{\partial^2\f}{\partial x_i \partial x_j}\right|^2\dx{x}\dx{t} +2\iint_\OmegaT \p \sum_{i,j=1}^d \left|\frac{\partial^2\Phi}{\partial x_i \partial x_j}\right|^2\dx{x}\dx{t}\\[0.7em]
    &\leq C(T),
\end{split}
\end{align}
and since $\gamma>1$
\begin{align*}
   \iint_\OmegaT \p |\Delta \p+G|^2\dx{x}\dx{t} 
   &\leq 2\iint_\OmegaT  \p |\Delta \f+G|^2\dx{x}\dx{t} +2\iint_\OmegaT  \p |\Delta \Phi+G|^2\dx{x}\dx{t}\\[0.7em]
   &\leq C(T),
\end{align*}
and the first part of the lemma is proven. Now it remains to prove the $L^4$-bound of the pressure gradient. Integrating by parts we have
\begin{align*}
	\int_{\Omega} |\nabla \p|^4\dx{x} = - \int_\Omega \p \Delta \p |\nabla \p|^2\dx{x} - \int_\Omega \p \nabla \p\cdot \nabla(|\nabla \p|^2)\dx{x}.
\end{align*}
Applying Young's inequality to the first term, we obtain
\begin{align*}
	\frac{1}{2} \int_{\Omega} |\nabla \p|^4\dx{x} \leq \frac{1}{2} \int_{\Omega} \p^2 |\Delta \p|^2\dx{x} - 2 \sum_{i,j=1}^d  \int_{\Omega}\p \frac{\partial \p}{\partial x_i} \frac{\partial\p}{\partial x_j} \frac{\partial^2 \p}{\partial x_i \partial x_j}\dx{x}.
\end{align*}
Thanks to Young's inequality, the last term can be bounded from above by 
\begin{align*}
	\left| 2 \sum_{i,j=1}^d  \int_{\Omega}\p \frac{\partial \p}{\partial x_i} \frac{\partial\p}{\partial x_j} \frac{\partial^2 \p}{\partial x_i \partial x_j} \dx{x}\right| \leq \frac{1}{4} \int_{\Omega} |\nabla \p|^4\dx{x} + 4\int_{\Omega} \p^2 \sum_{i,j=1}^d \left|\frac{\partial^2  \p}{\partial x_i \partial x_j}\right|^2\dx{x}.  
\end{align*}
Therefore, we obtain
\begin{equation*}
	\frac{1}{4} \int_{\Omega} |\nabla \p|^4\dx{x} \leq \frac{1}{2} \int_{\Omega} \p^2 |\Delta \p|^2 \dx{x} + 4\int_{\Omega} \p^2 \sum_{i,j=1}^d \left|\frac{\partial^2  \p}{\partial x_i \partial x_j}\right|^2\dx{x}.
\end{equation*}
Since $\p\leq p_M$ and thanks to Eq. \eqref{eq: pd2p}, we conclude that
\begin{align*}
	\iint_{\OmegaT} |\nabla \p|^4\dx{x}\dx{t} \leq& C(T),
\end{align*}
which completes the proof.
\end{proof}

Building on the $L^4$-estimate on the pressure gradient, we are now dedicated to an additional bound on the pressure which, by itself, yields $L^1$-compactness of the pressure gradient. In conjunction with the $L^4$-estimate the gradient is then shown to be strongly compact in any $L^p(\OmegaT)$, for $1\leq p<4$, \cf Lemma \ref{lemma:strongconv}. The subsequent estimate is an $L^p$-version of the celebrated Aronson-B\'enilan estimate, \cf \cite{AB, BPS2020}. At the heart of its proof is the study of an auxiliary second-order quantity and its evolution along the flow of the pressure equation. We define $w:=\Delta \p +G(\p)$ and, for the reader's convenience, recall that the pressure satisfies the equation
\begin{equation}
    \label{eq:pressure_for_AB}
    \partialt \p= \gamma \p w + \gamma \p \Delta \Phi + \nabla \p \cdot (\nabla \p + \nabla \Phi).
\end{equation} 
\begin{lemma}[Aronson-B\'enilan $L^3$-estimate.]\label{lemma:ABL3}
For all $T>0$ and $\gamma > \max(1, 2-\frac 2 d) $, there exists a positive constant $C(T)$, independent of $\gamma$, such that 
\begin{equation*}
    \iint_{\OmegaT} |w|_-^3 \dx{x}\dx{t}\leq C(T).
\end{equation*}
\end{lemma}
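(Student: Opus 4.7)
The strategy is to derive a parabolic inequality for $\int|w|_-^2$ in which the $L^3$-control of $|w|_-$ arises as the dominant dissipative term. The first step is to derive the evolution equation for $w := \Delta\p + G(\p)$ from \eqref{eq:pressure_for_AB} by computing $\partial_t w = \Delta(\partial_t\p) + G'(\p)\,\partial_t\p$. Employing $\Delta|\nabla\p|^2 = 2|D^2\p|^2 + 2\nabla\p\cdot\nabla\Delta\p$ and $\nabla\Delta\p = \nabla w - G'(\p)\nabla\p$, a direct but lengthy calculation yields
\begin{align*}
\partial_t w &= \gamma\p\,\Delta w + \bigl[(2\gamma+2)\nabla\p + \nabla\Phi\bigr]\!\cdot\!\nabla w + \gamma w^2 + 2|D^2\p|^2 \\
&\quad + \gamma w\bigl[-G(\p)+\Delta\Phi+\p G'(\p)\bigr] - G'(\p)|\nabla\p|^2 + \mathcal{S}[\Phi],
\end{align*}
where $\mathcal{S}[\Phi] = \gamma\bigl[-G\Delta\Phi+\p G'\Delta\Phi + 2\nabla\p\cdot\nabla\Delta\Phi + \p\Delta^2\Phi\bigr] + 2D^2\p\!:\!D^2\Phi + \nabla\p\cdot\nabla\Delta\Phi$ collects the source terms carrying up to three derivatives of $\Phi$.

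Multiplying by $-|w|_-\chi_{\{w<0\}}$ (rigorously via a Stampacchia-type smoothing of $\tfrac12|s|_-^2$) and integrating over $\R^d$ gives $\tfrac{d}{dt}\int\tfrac12|w|_-^2\,dx = -\!\int_{\{w<0\}}\!|w|_-\,\partial_t w\,dx$. The essential integration-by-parts identity is
\[
\int|w|_-\chi_{\{w<0\}}\,\nabla\varphi\cdot\nabla w\,dx = \tfrac12\!\int\!(\Delta\varphi)\,|w|_-^2\,dx,
\]
which for $\varphi=\p$ reduces, via $\Delta\p=w-G$ and $w|w|_-^2=-|w|_-^3$ on $\{w<0\}$, to $-\tfrac12\int|w|_-^3 - \tfrac12\int G|w|_-^2$. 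Collecting the contributions to the coefficient of $\int|w|_-^3$ from the diffusion $\gamma\p\Delta w$ ($-\tfrac{\gamma}{2}$), from the drift $(2\gamma+2)\nabla\p\cdot\nabla w$ ($+(\gamma+1)$), from the quadratic source $\gamma w^2$ ($-\gamma$), and from the Cauchy--Schwarz estimate $|D^2\p|^2\geq\tfrac{(w-G)^2}{d}$ ($-\tfrac{2}{d}$, after accounting for $|w|_-(w-G)^2 = |w|_-^3 + 2G|w|_-^2 + G^2|w|_-$ on $\{w<0\}$), the net coefficient is $-\bigl(\tfrac{\gamma}{2}-1+\tfrac{2}{d}\bigr)$, strictly negative for $\gamma>2-\tfrac{4}{d}$ and hence \emph{a fortiori} under the stated hypothesis $\gamma>\max(1,2-\tfrac{2}{d})$. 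Simultaneously, the diffusive IBP produces the gradient dissipation $\gamma\int\p\chi_{\{w<0\}}|\nabla w|^2\,dx$.

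The lower-order and $\mathcal{S}[\Phi]$-terms must then be absorbed. Zeroth-order contributions of the form $C\gamma\!\int\!|w|_-^2$ or $C\!\int\!|w|_-$ are dominated via Young's inequality, leaving residuals of order $C\gamma$. The most sensitive piece $\gamma\p\Delta^2\Phi$ is first integrated by parts against $|w|_-\chi_{\{w<0\}}$ to yield two cleaner pieces: one of the form $\gamma\!\int\!|w|_-|\nabla\p||\nabla\Delta\Phi|$, which via Young's inequality $ab\leq\epsilon a^3 + C_\epsilon b^{3/2}$ is bounded by $\epsilon\gamma\iint|w|_-^3 + C\gamma\iint|\nabla\p|^{3/2}|\nabla\Delta\Phi|^{3/2}$ --- the second integral being finite by the H\"older pairing $\tfrac{3/2}{4}+\tfrac{3/2}{12/5}=1$ using $\nabla\p\in L^4(Q_T)$ (Lemma \ref{lemma:L4}) and $\nabla\Delta\Phi\in L^{12/5}_{\mathrm{loc}}(Q_T)$ from \eqref{eq:assptn_phi_AB} --- and another of the form $\gamma\!\int\!\p\chi|\nabla w||\nabla\Delta\Phi|$, absorbed into $\gamma\!\int\!\p\chi|\nabla w|^2$ via $2ab\leq a^2+b^2$ and the embedding $L^{12/5}_{\mathrm{loc}}\subset L^2_{\mathrm{loc}}$ on bounded domains. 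The term $2D^2\p\!:\!D^2\Phi$ is treated analogously after one further IBP redistributes a derivative onto $|w|_-$ and $\p$. Collecting all estimates produces, for a constant $c(\gamma,d)\gtrsim\gamma$,
\[
\frac{d}{dt}\!\int\!\tfrac12|w|_-^2\,dx + c(\gamma,d)\!\int\!|w|_-^3\,dx + \tfrac{\gamma}{2}\!\int\!\p\,\chi_{\{w<0\}}|\nabla w|^2\,dx \leq C(T)\,\gamma.
\]
Integrating in $t\in(0,T)$, using the initial bound $\||w|_-(0)\|_{L^2}\leq C$ inherited from \eqref{eq:assptn_init3}, and dividing by $c(\gamma,d)$ yields $\iint_{Q_T}|w|_-^3\,dx\,dt\leq C(T)$, uniformly in $\gamma$. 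The main obstacle is the bookkeeping for $\mathcal{S}[\Phi]$: the $L^{12/5}_{\mathrm{loc}}$-regularity of $\nabla\Delta\Phi$ is precisely the minimal assumption compatible with the $L^4$-control of $\nabla\p$ that closes the estimate at the target level $|w|_-\in L^3$, because the H\"older exponents must pair exactly.
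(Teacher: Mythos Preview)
Your overall strategy --- derive $\partial_t w$, multiply by $-|w|_-$, integrate, and balance the dissipative $|w|_-^3$-term against lower-order and $\Phi$-dependent remainders --- is exactly the paper's approach, and your bookkeeping of the four contributions to the coefficient of $\int|w|_-^3$ is arithmetically correct as stated. The gap lies in your treatment of the cross-Hessian term $2D^2\p:D^2\Phi$, which you place in $\mathcal{S}[\Phi]$ and claim to handle ``analogously after one further IBP.'' The analogy fails: for the term $\gamma\p\,\Delta^2\Phi$ the IBP succeeds precisely because the resulting piece $\gamma\int\p\,\nabla|w|_-\cdot\nabla\Delta\Phi$ carries a factor of $\p$, which is what allows absorption into the dissipation $\gamma\int\p\,|\nabla|w|_-|^2$. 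For $2D^2\p:D^2\Phi$ there is no such factor; moving one derivative off $D^2\p$ produces $\int(\nabla|w|_-)^T D^2\Phi\,\nabla\p$, which contains $\nabla|w|_-$ without any weight $\p$, and you have no independent control of $\int|\nabla|w|_-|^2$ --- only of $\int\p\,|\nabla|w|_-|^2$.

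The paper instead applies Young's inequality \emph{before} multiplying by $-|w|_-$, writing $2D^2\p:D^2\Phi\geq-|D^2\p|^2-|D^2\Phi|^2$. This sacrifices one copy of $|D^2\p|^2$, so that only $|D^2\p|^2$ (not $2|D^2\p|^2$) remains available for the Cauchy--Schwarz bound $|D^2\p|^2\geq\tfrac{1}{d}(w-G)^2$. The Hessian contribution to the $|w|_-^3$-coefficient is therefore $-\tfrac{1}{d}$ rather than your $-\tfrac{2}{d}$, the net coefficient becomes $-(\tfrac{\gamma}{2}-1+\tfrac{1}{d})$, and this is strictly negative precisely when $\gamma>2-\tfrac{2}{d}$ --- which is the threshold stated in the lemma and explains why your weaker condition $\gamma>2-\tfrac{4}{d}$ should have been a flag. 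With this single correction the remainder of your argument, including the $L^4\times L^{12/5}\times L^3$ H\"older pairing for the $\nabla\Delta\Phi$-terms, matches the paper's proof.
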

\begin{proof}
We compute the time derivative of $w$
\begin{align*}
    \partialt w = &\gamma \Delta (\p w) + \gamma \p \Delta (\Delta \Phi) + \gamma (w-G)\Delta \Phi + 2 \gamma \nabla \p \cdot \nabla (\Delta \Phi) + 2 \nabla \p \cdot \nabla (w-G) \\
    &+ 2 \sum_{i,j=1}^d  \left|\frac{\partial^2 \p}{\partial x_i \partial x_j}\right|^2 +\nabla (w-G)\cdot \nabla \Phi + \nabla \p \cdot \nabla (\Delta \Phi) + 2 \sum_{i,j=1}^d  \frac{\partial^2 \p}{\partial x_i \partial x_j} \frac{\partial^2 \Phi}{\partial x_i \partial x_j} + G' \partialt \p. 
\end{align*}
Young's inequality yields
\begin{align*}
    \left| 2 \sum_{i,j=1}^d  \partial_{i,j}^2\p \frac{\partial^2 \Phi}{\partial x_i \partial x_j} \right| \leq \sum_{i,j=1}^d \left|\frac{\partial^2 \p}{\partial x_i \partial x_j}\right|^2 +\sum_{i,j=1}^d \left|\frac{\partial^2 \Phi}{\partial x_i \partial x_j}\right|^2,
\end{align*}
and thus, using Eq. \eqref{eq:pressure_for_AB}, we get
\begin{align*}
    \partialt w \geq &\gamma \Delta (\p w) + \gamma \p \Delta (\Delta \Phi) +\gamma w \Delta \Phi - \gamma G\Delta \Phi + (2 \gamma +1) \nabla \p \cdot \nabla (\Delta \Phi) + 2 \nabla \p \cdot \nabla w\\[0.7em]
    &-2|\nabla p|^2 G' + \sum_{i,j=1}^d  \left|\frac{\partial^2 \p}{\partial x_i \partial x_j}\right|^2 -\sum_{i,j=1}^d \left|\frac{\partial^2 \Phi}{\partial x_i \partial x_j}\right|^2 +\nabla w\cdot \nabla \Phi- G'\nabla p \cdot \nabla \Phi\\[0.5em]
    &+ \gamma G' \p w +\gamma \p G' \Delta \Phi + G' |\nabla \p|^2 +G' \nabla \p \cdot \nabla \Phi .
\end{align*}
We use the fact that 
\begin{equation*}
    \sum_{i,j=1}^d  \left|\frac{\partial^2 \p}{\partial x_i \partial x_j}\right|^2\geq \frac
    1 d |\Delta \p|^2 = \frac{1}{d} (w-G)^2,
\end{equation*}
and we obtain
\begin{align*}
    \partialt w \geq &\gamma \Delta (\p w) + \gamma \p \Delta (\Delta \Phi) + \gamma w \Delta \Phi- \gamma G\Delta \Phi + (2 \gamma +1) \nabla \p \cdot \nabla (\Delta \Phi) + 2 \nabla \p \cdot \nabla w\\ 
    &-|\nabla p|^2 G' 
    +\frac 1 d w^2 - \frac{2}{d} w G + \frac{1}{d} G^2 -\sum_{i,j=1}^d \left|\frac{\partial^2 \Phi}{\partial x_i \partial x_j}\right|^2  +\nabla w\cdot \nabla \Phi
    \\
    &+ \gamma G' \p w +\gamma \p G' \Delta \Phi.
\end{align*}
We multiply by $-|w|_-$, to find
\begin{align*}
    -\partialt w |w|_- \leq &- \frac 1 d |w|_-^3 + \gamma \Delta \Phi |w|_-^2 -\frac{2}{d}G |w|_-^2+ \gamma G' \p |w|_-^2
    -\frac{1}{d}G^2|w|_- +\gamma G\Delta \Phi|w|_- \\[0.7em]
    &+\sum_{i,j=1}^d \left|\frac{\partial^2 \Phi}{\partial x_i \partial x_j}\right|^2|w|_- -\gamma \p G' \Delta \Phi |w|_- +|\nabla \p|^2 G'|w|_- \\[0.6em]
    &+\gamma \Delta (\p |w|_-) |w|_-+ 2 \nabla \p \cdot \nabla |w|_-|w|_-\\[1em]
    &-\gamma \p \Delta (\Delta \Phi)|w|_- - (2 \gamma +1) \nabla \p \cdot \nabla (\Delta \Phi)|w|_-  \\[1em]
     &+\nabla \Phi\cdot \nabla |w|_- |w|_- .
\end{align*}
Hence, using the fact that $G'<-\alpha$ and integrating in space and time, we obtain
\begin{align}\label{eq:finalinequality}
\begin{split}
    - \int_\Omega \frac{|w^0|_-^2}{2}\dx{x}\leq &- \frac 1 d  \iint_\OmegaT |w|_-^3 \dx{x}\dx{t} +C \gamma   \iint_\OmegaT |w|^2_-\dx{x}\dx{t} +C \gamma \iint_\OmegaT |w|_-\dx{x}\dx{t}
    \\[1em]
    &
    +\underbrace{\gamma \iint_\OmegaT \Delta (\p |w|_-) |w|_- + 2  \nabla \p \cdot \nabla |w|_-|w|_-\dx{x}\dx{t}}_{\curlyI_1}\\
    &\underbrace{-\gamma  \iint_\OmegaT \p \Delta (\Delta \Phi)|w|_-\dx{x}\dx{t}}_{\curlyI_2} - \underbrace{(2 \gamma +1) \iint_\OmegaT \nabla \p \cdot \nabla (\Delta \Phi)|w|_-\dx{x}\dx{t}}_{\curlyI_3} \\ 
    &+\underbrace{ \iint_\OmegaT \nabla \Phi\cdot \nabla |w|_- |w|_-\dx{x}\dx{t}}_{\curlyI_4} 
    \end{split}
\end{align}
where $C$ represents different constants depending on the $L^\infty$-norms of $G$, $G'$ and $\partial^2_{i,j} \Phi$, for $i,j=1,\dots,d$.

Now, we compute each term individually. Integration by parts yields
\begin{align*}
    \curlyI_1= &\gamma \iint_\OmegaT \Delta (\p |w|_-) |w|_- + 2  \nabla \p \cdot \nabla |w|_-|w|_-\dx{x}\dx{t}\\[0.7em]
    = &-\frac \gamma 2  \iint_\OmegaT \nabla \p \cdot \nabla |w|_-^2\dx{x}\dx{t} -\gamma  \iint_\OmegaT p \left|\nabla |w|_-\right|^2\dx{x}\dx{t} +  \iint_\OmegaT \nabla \p \cdot \nabla |w|_-^2\dx{x}\dx{t}\\[0.7em]
    = & -\left(1-\frac \gamma 2\right)  \iint_\OmegaT(w-G)|w|_-^2\dx{x}\dx{t} -\gamma \iint_\OmegaT \p \left|\nabla |w|_-\right|^2\dx{x}\dx{t} \\[0.7em]
    = & \left(1-\frac \gamma 2\right) \iint_\OmegaT|w|_-^3\dx{x}\dx{t} + \left(1-\frac \gamma 2\right) \iint_\OmegaT G |w|_-^2\dx{x}\dx{t} - \gamma  \iint_\OmegaT \p \left|\nabla |w|_-\right|^2\dx{x}\dx{t}\\[0.7em]
    \leq &  \left(1-\frac \gamma 2\right) \iint_\OmegaT|w|_-^3\dx{x}\dx{t} - \gamma  \iint_\OmegaT \p \left|\nabla |w|_-\right|^2\dx{x}\dx{t} + C \gamma  \iint_\OmegaT |w|_-^2\dx{x}\dx{t}.
\end{align*}
We continue by using integration by parts and Young's inequality to get
\begin{align*}
    \curlyI_2=&-\gamma  \iint_\OmegaT \p \Delta (\Delta \Phi)|w|_-\dx{x}\dx{t}\\[0.7em]
    =&\gamma \iint_\OmegaT \p \nabla(\Delta \Phi)\cdot\nabla|w|_-\dx{x}\dx{t} + \gamma \iint_\OmegaT \nabla \p \cdot \nabla(\Delta \Phi) |w|_- \dx{x}\dx{t}\\[0.7em]
    \leq &\frac \gamma 2 \iint_\OmegaT \p\left|\nabla|w|_-\right|^2\dx{x}\dx{t} +\frac \gamma 2  \iint_\OmegaT\p \left|\nabla (\Delta \Phi)\right|^2\dx{x}\dx{t} \\[0.7em]
    &+ \gamma \left(\iint_\OmegaT|\nabla\p|^4 \right)^{1/4}\left(\iint_\OmegaT |\nabla(\Delta \Phi)|w|_-|^{4/3}\dx{x}\dx{t}\right)^{3/4}\\[0.7em]
    \leq & \frac \gamma 2 \iint_\OmegaT \p\left|\nabla|w|_-\right|^2\dx{x}\dx{t} +\frac \gamma 2  \iint_\OmegaT\p \left|\nabla (\Delta \Phi)\right|^2\dx{x}\dx{t} \\[0.7em]
    &+ C \gamma \left(\iint_\OmegaT |\nabla(\Delta \Phi)|^{12/5}\dx{x}\dx{t}\right)^{5/12}     \left(\iint_\OmegaT|w|_-^{3}\dx{x}\dx{t} \right)^{1/3}\\[0.7em]
    \leq &\frac \gamma 2 \iint_\OmegaT \p\left|\nabla|w|_-\right|^2\dx{x}\dx{t} + C \gamma + C \gamma \left(\iint_\OmegaT|w|_-^{3}\dx{x}\dx{t} \right)^{1/3},
\end{align*}
where we used H\"older's inequality, the $L^4$-bound of the pressure gradient of Lemma~\ref{lemma:L4} and the assumption \eqref{eq:assptn_phi_AB},  $\nabla(\Delta \Phi) \in L_{\mathrm{loc}}^{12/5}(Q_T)$.

Using again Young's and Holder's inequalities we have
\begin{align*}
    \curlyI_3\leq &(2\gamma+1) \left(\iint_\OmegaT|\nabla\p|^4\dx{x}\dx{t} \right)^{1/4}\left(\iint_\OmegaT |\nabla(\Delta \Phi)|w|_-|^{4/3}\dx{x}\dx{t}\right)^{3/4}\\[0.7em] 
    \leq & C \gamma \left(\iint_\OmegaT |\nabla(\Delta \Phi)|^{12/5}\dx{x}\dx{t}\right)^{5/12}
    \left(\iint_\OmegaT|w|_-^{3}\dx{x}\dx{t} \right)^{1/3}\\[0.7em]
    \leq &C \gamma  \left(\iint_\OmegaT |w|_-^3 \dx{x}\dx{t}\right)^{1/3}.
\end{align*}
The last term is
\begin{equation*}
    \curlyI_4=\iint_\OmegaT \frac 1 2\nabla \Phi \cdot \nabla |w|_-^2\dx{x}\dx{t} = -\frac 1 2 \iint_\OmegaT  \Delta \Phi |w|_-^2\dx{x}\dx{t}\leq C \iint_\OmegaT |w|_-^2\dx{x}\dx{t}.
\end{equation*}
Here we have used the fact that $\Omega$ is a compact set which contains $\text{supp}(\p)$ and large enough such that $\Delta \p=0$ on $\partial \Omega$, then $|w|_-=0$ on $\partial \Omega$.

Hence, gathering all the estimates and using H\"older's inequality, we can rewrite Eq. \eqref{eq:finalinequality} as 
\begin{align*}
    \left(\frac \gamma 2 -1 +\frac 1 d\right) \iint_\OmegaT |w|_-^3\dx{x}\dx{t} \leq& \ C \gamma \left(\iint_\OmegaT |w|_-^3\dx{x}\dx{t}\right)^{1/3} +C \gamma \left(\iint_\OmegaT |w|_-^3\dx{x}\dx{t}\right)^{2/3}+ C \gamma ,
\end{align*}
since we assumed $|w^0|_- \in L^2(\R^d)$.
Finally, for $\gamma > \max(1, 2 - 2/d)$, we have
\begin{equation*}
    \iint_\OmegaT |w|_-^3 \dx{x}\dx{t}\leq C \left(\iint_\OmegaT |w|_-^3 \dx{x}\dx{t}\right)^{1/3}  + C \left( \iint_\OmegaT |w|_-^3 \dx{x}\dx{t}\right)^{2/3} + C,
\end{equation*}
which yields
\begin{equation*}
    \iint_\OmegaT |w|_-^3 \dx{x}\dx{t} \leq C(T),
\end{equation*}
where $C(T)$ depends on $T$, $|\Omega|$ and previous uniform bounds, and the proof is concluded.

\end{proof}
\begin{corollary}\label{corollary:L1laplacep}
It holds
\begin{equation}\label{eq:L1laplacep}
     \iint_{\OmegaT} |\Delta \p| \dx{x}\dx{t} \leq C(T).
\end{equation}
\end{corollary}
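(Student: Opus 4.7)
The plan is to exploit the compact support of $\p$ in $\Omega$ together with the $L^3$-bound on $|w|_-$ from Lemma~\ref{lemma:ABL3}. Since $\p(\cdot,t)$ has compact support strictly inside $\Omega$ (by the first part of Lemma~\ref{lemma:aprioriestimates}, taking $\Omega$ large enough), an integration by parts gives $\int_\Omega \Delta \p \dx{x} = 0$ for a.e. $t \in (0,T)$. Hence the positive and negative parts of $\Delta \p$ have equal mass, so
\begin{equation*}
    \int_\Omega |\Delta \p|\dx{x} = 2 \int_\Omega |\Delta \p|_-\dx{x},
\end{equation*}
and it suffices to control the negative part.

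Next, I would write $\Delta \p = w - G(\p)$ and observe that, because $G \geq 0$ on $[0,p_M]$ (as $G$ is decreasing with $G(p_M)=0$), one has the pointwise estimate
\begin{equation*}
    |\Delta \p|_- = (w - G(\p))_- \leq |w|_- + G(\p) \mathds{1}_{\supp \p} \leq |w|_- + G(0) \mathds{1}_\Omega.
\end{equation*}
Integrating this inequality over $\Omega_T$ and applying H\"older's inequality to absorb $|w|_-$ into its $L^3$-norm,
\begin{equation*}
    \iint_{\OmegaT} |w|_-\dx{x}\dx{t} \leq |\Omega_T|^{2/3}\prt*{\iint_{\OmegaT} |w|_-^3\dx{x}\dx{t}}^{1/3},
\end{equation*}
the right-hand side is bounded uniformly in $\gamma$ by Lemma~\ref{lemma:ABL3}. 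Since $|\Omega|$ is finite, $\iint_{\OmegaT} G(0) \mathds{1}_\Omega\dx{x}\dx{t} \leq C(T)$, and combining with the identity from the first step we conclude
\begin{equation*}
    \iint_{\OmegaT} |\Delta \p|\dx{x}\dx{t} = 2 \iint_{\OmegaT} |\Delta \p|_-\dx{x}\dx{t} \leq C(T),
\end{equation*}
as desired. No obstacle is expected here; the proof is essentially an elementary consequence of Lemma~\ref{lemma:ABL3} together with the compact support property and the sign of $G$.
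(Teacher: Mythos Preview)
Your proof is correct and follows essentially the same route as the paper's: both arguments use the compact support of $\p$ to convert the full $L^1$-norm of $\Delta \p$ into a bound on a negative part, then invoke the $L^3$-control of $|w|_-$ from Lemma~\ref{lemma:ABL3} via H\"older together with the boundedness of $G$. The only cosmetic difference is that the paper applies the identity $|w| = w + 2|w|_-$ to $w = \Delta\p + G$ and subtracts $G$ at the end, whereas you apply $\int|\Delta\p| = 2\int|\Delta\p|_-$ directly and then estimate $|\Delta\p|_- \leq |w|_- + G(\p)$ pointwise; the ingredients and logic are the same.
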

\begin{proof}
The compact support assumption yields
\begin{equation*}
    \iint_\OmegaT (\Delta \p + G) \dx{x}\dx{t} \leq C(T),
\end{equation*}
and then, thanks to H\"older's inequality, we have
\begin{align*}
    \iint_\OmegaT |\Delta \p+G| \dx{x}\dx{t} &= \iint_\OmegaT (\Delta \p +G) \dx{x}\dx{t} + 2 \iint_\OmegaT |w|_-\dx{x}\dx{t}\\
    &\leq C(T) + C \left(\iint_\OmegaT |w|^3_- \dx{x}\dx{t}\right)^{1/3}\\
    &\leq C(T).
\end{align*}
Finally, since $G$ is bounded, we obtain
\begin{equation*}
     \iint_\OmegaT |\Delta \p|\dx{x}\dx{t} \leq C(T).
\end{equation*}
\end{proof}

\begin{remark}
The proof of the Aronson-B\'enilan estimate can be made independent of the $L^4$-bound on $\nabla \p$ imposing a stronger condition on $\Phi$, namely $\nabla(\Delta\Phi)\in L^6$ rather than $L^{12/5}$.
\end{remark}

The bounds provided by Lemma~\ref{lemma:L4} and Lemma~\ref{lemma:ABL3} allow us to prove the strong convergence of $\nabla \p$ in $L^2(Q_T)$ thanks to compactness arguments, in particular the Fr\'echet-Kolmogorov theorem and the Aubin-Lions lemma.
\begin{lemma}[Strong convergence of the pressure gradient]\label{lemma:strongconv}
For any $T>0$ it holds
\begin{equation*}
    \nabla \p \rightarrow \nabla p_\infty,
\end{equation*}
strongly in $L^2(Q_T)$.
\end{lemma}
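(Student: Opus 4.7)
My strategy is to upgrade the a priori estimates to a fractional spatial Sobolev bound for $\p$ and then invoke the Aubin-Lions lemma, using both compactness tools mentioned in the paper. The uniform $L^2(Q_T)$-bound of Lemma~\ref{lemma:aprioriestimates} already yields (along a subsequence) $\nabla\p\rightharpoonup \nabla p_\infty$ weakly in $L^2(Q_T)$; the task is to upgrade this to strong convergence.

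\emph{Spatial shift estimate via Fr\'echet-Kolmogorov.} For $h\in\R^d$ small, integration by parts on the compactly supported domain gives
\[
\int_{\R^d}|\nabla\p(x+h,t)-\nabla\p(x,t)|^2\,dx = -\int_{\R^d}\prt*{\p(x+h,t)-\p(x,t)}\prt*{\Delta\p(x+h,t)-\Delta\p(x,t)}\,dx.
\]
For $d<4$, the $L^4$-gradient bound of Lemma~\ref{lemma:L4} together with Morrey's embedding $W^{1,4}(\Omega)\hookrightarrow C^{0,1-d/4}(\Omega)$ controls the first factor pointwise by $C|h|^{1-d/4}(\|\nabla\p(\cdot,t)\|_{L^4(\Omega)}+1)$. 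The second factor is in $L^1_x$ by Corollary~\ref{corollary:L1laplacep}; the key refinement is that Lemma~\ref{lemma:ABL3} together with the identity $\int_{\R^d}\Delta\p\,dx=0$ (from compact support) yields
\[
\|\Delta\p(\cdot,t)\|_{L^1(\Omega)} \leq 2\|(\Delta\p)_-(\cdot,t)\|_{L^1(\Omega)} \leq C\|w_-(\cdot,t)\|_{L^3(\Omega)}+C,
\]
so that $\|\Delta\p(\cdot,t)\|_{L^1(\Omega)}$ is uniformly bounded in $L^3(0,T)$. H\"older's inequality in time, pairing $L^4(0,T;L^\infty_x)$ against $L^{4/3}(0,T;L^1_x)$, then delivers
\[
\|\nabla\p(\cdot+h,\cdot)-\nabla\p\|_{L^2(Q_T)}^2 \leq C|h|^{1-d/4},
\]
uniformly in $\gamma$.

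\emph{Aubin-Lions in time.} Via the Besov/Nikolskii characterization of fractional Sobolev spaces, the shift estimate yields $\p \in L^2(0,T; H^{1+\beta}(\Omega))$ uniformly for some $\beta>0$. Combining with the bound $\partial_t\p\in L^1(Q_T)\hookrightarrow L^1(0,T; H^{-s}(\Omega))$ for $s>d/2$ from Lemma~\ref{lemma:aprioriestimates}, and the compact embedding $H^{1+\beta}(\Omega)\hookrightarrow\hookrightarrow H^1(\Omega)$, Simon's version of the Aubin-Lions lemma (which accommodates $L^1$-time-derivatives) gives compactness of $\p$ in $L^2(0,T; H^1(\Omega))$. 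This is precisely the strong convergence $\nabla\p\to\nabla p_\infty$ in $L^2(Q_T)$, with the limit identified by uniqueness against the weak limit.

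The main obstacle is the spatial shift estimate, where both newly-derived ingredients play essential roles. The $L^4$-gradient bound (Lemma~\ref{lemma:L4}) produces H\"older control in $x$ via Morrey, while the $L^3$-Aronson-B\'enilan estimate (Lemma~\ref{lemma:ABL3}) upgrades $\|\Delta\p\|_{L^1_x}$ from $L^1(0,T)$ to $L^3(0,T)$---precisely what is needed to close the H\"older pairing in time uniformly in $\gamma$. Without either refinement, the spatial compactness of $\nabla\p$ would fail.
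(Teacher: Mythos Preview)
Your argument is correct for $d\leq 3$ but carries an explicit dimension restriction (``For $d<4$'') that the paper's proof does not require. The Morrey step $W^{1,4}\hookrightarrow C^{0,1-d/4}$ is the bottleneck: in $d\geq 4$ no pointwise H\"older control is available from the $L^4$-gradient bound, and your shift estimate collapses. So as a proof of the lemma \emph{as stated}, this is incomplete, even though in the physically relevant dimensions your route is sound.

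The paper's argument is genuinely different and dimension-free. Rather than extracting extra spatial regularity for $\nabla\p$, it shows directly that $(\nabla\p)_\gamma$ is Cauchy in $L^1(Q_T)$ by pairing two softer ingredients: the $L^1$-bound on $\Delta\p$ (Corollary~\ref{corollary:L1laplacep}) and the strong compactness of $\p$ itself (from the $BV$-bounds in Lemma~\ref{lemma:aprioriestimates}). Concretely, with the truncation $\psi_\epsilon(s)=\mathrm{sign}(s)\min(|s|,\epsilon)$ one integrates by parts to get
\[
\iint_{\{|\p-p_{\hat\gamma}|\leq\epsilon\}}|\nabla\p-\nabla p_{\hat\gamma}|^2\,\dx{x}\dx{t}\ \leq\ \epsilon\,\|\Delta\p-\Delta p_{\hat\gamma}\|_{L^1(\Omega_T)}\leq C\epsilon,
\]
while on the complement $\{|\p-p_{\hat\gamma}|>\epsilon\}$ the measure is small because $(\p)_\gamma$ is Cauchy in $L^1$. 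Combining the two pieces with Cauchy--Schwarz and the $L^2$-gradient bound makes $(\nabla\p)_\gamma$ Cauchy in $L^1(Q_T)$; a.e.\ convergence plus the uniform $L^4$-bound then upgrade this to strong $L^q$-convergence for every $q<4$. Time compactness is obtained, as in your argument, from the $L^1$-bound on $\partial_t\p$ via a mollification/Aubin--Lions step.

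In short: your approach trades the paper's ``Cauchy-in-$L^1$'' trick for an honest fractional Sobolev gain $H^{1+\beta}$, which is conceptually clean and makes Simon's Aubin--Lions immediately applicable---but only in low dimension. The paper's approach is less quantitative (no fractional gain is claimed) yet works for all $d$, because it never needs to convert the $L^4$-gradient bound into pointwise control.
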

\begin{proof}
Thanks to Lemma~\ref{lemma:L4}, we infer the weak convergence (up to a subsequence) of the pressure gradient
\begin{equation}\label{eq:w4}
	\nabla \p \rightharpoonup \nabla p_\infty,
\end{equation}
weakly in $L^4(Q_T)$.
From Lemma~\ref{lemma:ABL3}, we know that $\Delta \p$ is bounded in $L^1(Q_T)$, thus, we recover the local compactness in space of the pressure gradient in any $L^r(Q_T)$, with $1\leq r < 4$. The proof of this claim is an extension of  \cite[Theorem~1]{BG} to a space-time setting.
	
To this end, let us take define the continuous function $\psi$, by setting
\begin{equation*}
    \begin{cases}
	    \psi(s)=-\epsilon, \quad &\text{ for } s<-\epsilon,\\
	    \psi(s)=s,\quad &\text{ for } -\epsilon\leq x \leq\epsilon,\\
	    \psi(s)=\epsilon, \quad &\text{ for } s>\epsilon,
	\end{cases}
\end{equation*} 
for $\epsilon>0$.
Given $\gamma, \hat\gamma >1$, we compute
\begin{equation*}
    \iint_\OmegaT |\nabla \p-\nabla p_{\hat\gamma}|^2 \psi'(\p-p_{\hat\gamma})\dx{x}\dx{t}= - \iint_\OmegaT (\Delta \p -\Delta p_{\hat\gamma})\psi(\p-p_{\hat\gamma})\dx{x}\dx{t}.
\end{equation*}
Next we split the domain into two parts by defining the set 
$$
    \Omega_{T,\epsilon}:=\{(x,t) \in \OmegaT \;| \; |\p(x,t) - p_{\hat\gamma}(x,t)|\leq\epsilon\}.
$$ 
Thus, since $\Delta \p$ is bounded in $L^1(Q_T)$ (uniformly with respect to $\gamma$), we have
\begin{equation*}
    \iint_{\Omega_{T,\epsilon}} |\nabla \p-\nabla p_{\hat\gamma}|^2\dx{x}\dx{t} \leq C \epsilon.
\end{equation*}
Hence
\begin{align*}
    \iint_{\Omega_{T}} |\nabla \p-\nabla p_{\hat\gamma}|\dx{x}\dx{t} = & \iint_{\Omega_{T,\epsilon}} |\nabla \p-\nabla p_{\hat\gamma}|\dx{x}\dx{t} + \iint_{{\Omega_{T,\epsilon}^c}} |\nabla \p-\nabla p_{\hat\gamma}|\dx{x}\dx{t}\\[0.7em]
    \leq&\ C \epsilon^{1/2} + 2 \ T^{1/2} \|\nabla \p\|_{L^2(Q_T)} \cdot |\Omega^c_{T,\epsilon}|^{1/2},
\end{align*}
where in the last line we used H\"older's inequality. Since $\p$ is compact, it is a Cauchy sequence, and there exist $\Gamma(\epsilon)$ large enough such that
for $\gamma,\hat\gamma>\Gamma(\epsilon)$ there holds
\begin{equation*}
    \iint_\OmegaT |\nabla \p-\nabla p_{\hat\gamma}|\dx{x}\dx{t}\leq C\epsilon^{1/2}+ C \epsilon.
\end{equation*}
This implies that $\nabla \p$ is a Cauchy sequence in $L^1(Q_T)$. Up to a subsequence we have a.e. convergence. Thanks to Eq. \eqref{eq:w4}, the pressure gradient is compact in space in any $L^r(Q_T)$, for $1\leq r <4$.
    
It now remains to prove time compactness. Since the pressure gradient converges in space we know, by the Fr\'echet-Kolmogorov theorem, that the space shifts converge, too. This is to say, there exists a function $\omega: \R\rightarrow\R_{\geq0}$, such that
\begin{equation}\label{eq:spaceshift}
    \iint_{\OmegaT}|\nabla \p(x+k,t)-\nabla \p(x,t)|\dx{x}\dx{t}\leq \omega(|k|),
\end{equation}
with $\omega(|k|)\rightarrow 0$ as $|k|\rightarrow 0$.
From Lemma~\ref{lemma:aprioriestimates}, $\partial_t \p$ is bounded in $L^1(Q_T)$.
Then, thanks to the Aubin-Lions lemma, we obtain the compactness in time by the following argument: First, let  $\phi_\alpha(x)=\alpha^{-d}\phi(\alpha^{-1} x)$, with $\alpha>0$ and $\phi\in C^\infty(\R^d)$  be non-negative, smooth mollifier  such that $\int_{\R^d} \phi(x)\dx{x}=1$. We compute the time shift of $\nabla \p$ with $h>0$, namely
\begin{align*}
    \int_0^{T-h} &\|\nabla \p(t+h)-\nabla \p(t)\|_{L^1(\Omega)}\dx{t}\\
    &\leq  \int_0^{T-h}\|\nabla \p(t+h)-\nabla \p(t+h)\star\phi_\alpha\|_{L^1(\Omega)}\dx{t}\\[0.7em]
    &\qquad+ \int_0^{T-h}\|(\nabla \p(t+h)-\nabla \p(t))\star\phi_\alpha\|_{L^1(\Omega)}\dx{t}\\[0.7em]
    &\qquad+ \int_0^{T-h}\|\nabla \p(t) \star \phi_\alpha-\nabla \p(t)\|_{L^1(\Omega)}\dx{t},
\end{align*}
having used the triangular inequality. Upon integration by parts, we have
\begin{align*}
    \int_0^{T-h}&\|(\nabla \p(t+h)-\nabla \p(t)) \star \phi_\alpha\|_{L^1(\Omega)}\dx{t}\\[0.7em]
    &=\int_0^{T-h}\left\|\int_0^h \left(\partialt \p (t+s) \star \nabla \phi_\alpha\right)\dx{s}\right\|_{L^1(\Omega)}\dx{t}\\[0.7em]
    &\leq \int_0^{T-h} \int_0^h \int_{\Omega} \left|\frac{\partial \p}{\partial t}(t+s) \star \nabla \phi_\alpha\right|\dx{x} \dx{s}\dx{t}\\[0.7em]
    &\leq \|\nabla \phi_\alpha\|_{L^1(\R^d)} \int_0^{T-h} \int_0^h\int_{\Omega}\left|\frac{\partial \p}{\partial t}(x, t+s) \right|\dx{x}\dx{s}\dx{t},
\end{align*}
having used Young's convolution inequality in the last line. Applying Fubini's theorem we obtain
\begin{align*}
    \int_0^{T-h}&\|(\nabla \p(t+h)-\nabla \p(t)) \star \phi_\alpha\|_{L^1(\Omega)}\dx{t}\\[0.7em]
    &\leq \|\nabla \phi_\alpha\|_{L^1(\R^d)} \int_0^{T-h} \int_0^h\int_{\Omega}\left|\frac{\partial \p}{\partial t}(x, t+s) \right|\dx{x}\dx{s}\dx{t}\\[0.7em]
    &= \|\nabla \phi_\alpha\|_{L^1(\R^d)} \int_0^{T-h} \int_t^{t+h}\int_{\Omega}\left|\frac{\partial \p}{\partial t}(x, s) \right|\dx{x}\dx{s}\dx{t}\\[0.7em]
    &=\|\nabla \phi_\alpha\|_{L^1(\R^d)} \int_{0}^{T-h} \int_{\max(0, s-h)}^{\min(s, T-h)} \int_{\Omega}\left|\frac{\partial \p}{\partial t}(x, s) \right|\dx{x}\dx{t}\dx{s}\\[0.7em]
    &\leq C\alpha^{-1}|h|,
\end{align*}
having used the assumption on the mollifier and the uniform $L^1$-bound on $\partial \p$.

Since the space shifts satisfy  condition \eqref{eq:spaceshift}, we obtain
\begin{align*}
    \int_0^{T-h}\|\nabla \p(t)\star\phi_\alpha-\nabla \p(t)\|_{L^1(\Omega)}\dx{t} \leq
    & \int_0^{T-h}\left\|\int_{\R^d}\phi(y)( \nabla \p(\cdot-\alpha y,t)-\nabla \p(\cdot,t)) \dx{y} \right\|_{L^1(\Omega)}\dx{t} \\
    \leq& \int_{\R^d}\phi(y)\iint_\OmegaT \left|\nabla \p(x-\alpha y,t)-\nabla \p(x,t)\right| \dx{x}\dx{t}\dx{y}\\ 
    \leq& \int_{\R^d}\phi(y)\omega(\alpha |y|) \dx{y}.
\end{align*}
Gathering all the estimates we have 
\begin{equation*}
    \int_0^{T-h}\|\nabla \p(x,t+h)-\nabla \p(x,t)\|_{L^1(\R^d)}\dx{t} \leq C\frac{|h|}{\alpha} + 2 \int_{\R^d}\phi(y)\omega(\alpha |y|) \dx{y}\dx{t}.
\end{equation*}
Taking $\alpha=|h|^{1/2}$, we conclude that
\begin{equation*}
    \int_0^{T-h}\|\nabla \p (x,t+h)-\nabla \p(x,t)\|_{L^1(\R^d)}\dx{t}\rightarrow 0,
\end{equation*}
as $|h|\rightarrow 0$, which ultimately yields the time compactness. Hence
\begin{equation*}
	\nabla \p \rightarrow \nabla p_\infty,
\end{equation*}
strongly in $L^q(Q_T)$, for $1 \leq q < 4$, and in particular for $q=2$.
\end{proof}

\begin{remark}
The tumour growth rate usually depends also on the presence of nutrients, therefore one can couple Eq. \eqref{eq:n}, with an equation on the nutrient concentration. Then, the model reads
\begin{equation}\label{eq: withnutri}
   \left\{
    \begin{array}{rl}
      \partialt{\n} - \nabla\cdot \prt*{\n \nabla \p} -\nabla\cdot\prt*{\n \nabla \Phi} \!\!\! &= \n G(\p,c_\gamma), \\[1em]
        \partialt{c_\gamma} - \Delta c_\gamma  \!\!\! &= - \n H(c_\gamma),
    \end{array}
    \right.
\end{equation}
where $H$ is the nutrient consumption rate. Thus, system \eqref{eq: withnutri} is actually an extension of the model with nutrient studied in \cite{PQV}. 

Let us notice that the proofs of the estimates in Lemma~\ref{lemma:L4} and Lemma~\ref{lemma:ABL3} can be adapted for system \eqref{eq: withnutri} without any particular difficulty. In fact, the boundedness of the new terms depending on $c_\gamma, \nabla c_\gamma$, and $ \Delta c_\gamma$ relies only on the $L^2$-regularity of $c_\gamma$ and its derivatives, which comes directly from its equation in system \eqref{eq: withnutri}. Therefore, the strong convergence stated in Lemma~\ref{lemma:strongconv} still holds for this model. We refer the reader to \cite{PQV} and \cite{DP} for the complete treatment of these additional terms. 
\end{remark}

\section{The Incompressible Limit}
\label{sec:incomp_limit}
The results obtained in Section~\ref{sec:strongerest} allow us to finally pass to the incompressible limit in Eq. \eqref{eq:p} and obtain the complementarity relation, Eq. \eqref{eq:cr}. Let us point out that, thanks to the uniform (with respect to $\gamma$) boundness of $\nabla \p$ in $L^2(Q_T)$ and $\partial_t \p$ in $L^1(Q_T)$, the complementarity relation turns out to be equivalent to the strong convergence of $\nabla \p$ in $L^2(Q_T)$, given by Lemma~\ref{lemma:strongconv}.
\begin{theorem}[Complementarity relation] \label{thm:CR}
We may pass to the limit in Eq. \eqref{eq:p}, as $\gamma \rightarrow \infty$, and obtain the so-called \textit{complementarity relation}
\begin{equation*}
    p_\infty (\Lap p_\infty + \Lap \Phi + G(p_\infty))=0,
\end{equation*}
in the distributional sense. Moreover, $n_\infty$ and $p_\infty$ satisfy the equations
\begin{subequations}
\begin{equation} \label{eq:limsys}
        \partialt{n_\infty} = \Delta p_\infty + n_\infty G(p_\infty) + \nabla \cdot (n_\infty \nabla \Phi),
\end{equation}
in $\curlyD'(Q_T)$, as well as
\begin{equation}
\label{eq:limsys2}
    p_\infty(1-n_\infty) = 0,
\end{equation}
almost everywhere.
\end{subequations}
\end{theorem}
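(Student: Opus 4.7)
The plan is to recast each equation in distributional form and pass to the limit term-by-term, leveraging the strong $L^2$-compactness of $\nabla\p$ (Lemma~\ref{lemma:strongconv}) to handle the sole nonlinear obstruction, the square $|\nabla\p|^2$.

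\emph{Compactness and the saturation relation.} From Lemma~\ref{lemma:aprioriestimates}, $\n$ is uniformly bounded in $L^\infty(0,T;BV(\Omega))$ with $\partial_t\n$ uniformly bounded in $L^1(Q_T)$; Aubin-Lions thus yields a subsequence with $\n\to n_\infty$ strongly in $L^1(Q_T)$ and pointwise almost everywhere. The bound $\p\leq p_M$ gives $\n\leq p_M^{1/\gamma}\to 1$, whence $0\leq n_\infty\leq 1$. Similarly $\p$ is compact in $L^2(Q_T)$ (Aubin-Lions applied to the $H^1$-in-space, $W^{1,1}$-in-time bounds), and $\nabla\p\to \nabla p_\infty$ strongly in $L^2(Q_T)$ by Lemma~\ref{lemma:strongconv}. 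The saturation identity \eqref{eq:limsys2} follows from the pointwise relation $\p(1-\n)=\n^\gamma(1-\n)$: on $\{\n\in[0,1]\}$, $\max_{s\in[0,1]} s^\gamma(1-s)=\bigl(\tfrac{\gamma}{\gamma+1}\bigr)^{\gamma}\tfrac{1}{\gamma+1}\to 0$, while on $\{\n\in(1,p_M^{1/\gamma}]\}$, $|\p(1-\n)|\leq p_M(p_M^{1/\gamma}-1)\to 0$. Hence $\p(1-\n)\to 0$ uniformly and $p_\infty(1-n_\infty)=0$ a.e.

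\emph{Passage in the density equation.} In the weak formulation of Eq.~\eqref{eq:n}, the flux $\n\nabla\p$ decomposes as $\n(\nabla\p-\nabla p_\infty)+(\n-n_\infty)\nabla p_\infty$: the first summand converges to $0$ in $L^2$ by the uniform $L^\infty$-bound on $\n$ together with the strong $L^2$-convergence of $\nabla\p$, and the second converges to $0$ in $L^1$ by dominated convergence and the $L^2$-integrability of $\nabla p_\infty$. The drift term $\n\nabla\Phi$ and the source $\n G(\p)$ pass to the limit by dominated convergence. This establishes Eq.~\eqref{eq:limsys} in $\curlyD'(Q_T)$.

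\emph{Passage in the pressure equation.} Dividing Eq.~\eqref{eq:p} by $\gamma$ gives
$$\tfrac{1}{\gamma}\partial_t\p \;=\; \p\bigl(\Delta\p+\Delta\Phi+G(\p)\bigr) + \tfrac{1}{\gamma}\nabla\p\cdot\nabla(\p+\Phi).$$
Testing against $\varphi\in C_c^\infty(Q_T)$, the left-hand side contributes $-\tfrac{1}{\gamma}\iint \p\,\partial_t\varphi\to 0$ since $\p$ is uniformly bounded, and the last term on the right vanishes by the uniform $L^4$-bound on $\nabla\p$ (Lemma~\ref{lemma:L4}) together with the $L^\infty$-bound on $\nabla\Phi$. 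For the remaining nonlinear term I use the identity $\p\,\Delta\p = \tfrac{1}{2}\Delta(\p^2)-|\nabla\p|^2$: the Laplacian piece, tested against $\varphi$, becomes $\tfrac{1}{2}\iint \p^2\Delta\varphi\to\tfrac{1}{2}\iint p_\infty^2\Delta\varphi$ by strong $L^2$-convergence of $\p$, while $\iint|\nabla\p|^2\varphi\to\iint|\nabla p_\infty|^2\varphi$ by Lemma~\ref{lemma:strongconv}. The terms $\p\Delta\Phi$ and $\p G(\p)$ converge in $L^1$ by dominated convergence. Reassembling via the distributional identity valid for $p_\infty\in H^1$, we obtain $p_\infty(\Delta p_\infty+\Delta\Phi+G(p_\infty))=0$ in $\curlyD'(Q_T)$, which is Eq.~\eqref{eq:cr}.

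The single genuine difficulty is the quadratic term $|\nabla\p|^2$, which demands \emph{strong} rather than merely weak $L^2$-compactness of the pressure gradient; this is precisely the output of Lemma~\ref{lemma:strongconv}, powered by the $L^3$ Aronson-Bénilan estimate (Lemma~\ref{lemma:ABL3}) and the $L^4$-gradient bound (Lemma~\ref{lemma:L4}). Once that ingredient is in hand, the remainder of the limit passage is essentially mechanical.
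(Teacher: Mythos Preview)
Your argument is essentially correct and follows the same route as the paper: exploit strong $L^2$-compactness of $\nabla\p$ to pass to the limit in the quadratic term of the pressure equation, and use $BV$-compactness of $\n,\p$ to pass to the limit elsewhere. Your treatment of the complementarity relation via $\p\Delta\p=\tfrac12\Delta(\p^2)-|\nabla\p|^2$ is equivalent to the paper's integration-by-parts formulation.

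There is, however, one genuine gap in the passage in the density equation. Your decomposition shows $\n\nabla\p\to n_\infty\nabla p_\infty$, which yields
\[
\partial_t n_\infty=\nabla\cdot(n_\infty\nabla p_\infty)+\nabla\cdot(n_\infty\nabla\Phi)+n_\infty G(p_\infty),
\]
but Eq.~\eqref{eq:limsys} requires $\Delta p_\infty$, not $\nabla\cdot(n_\infty\nabla p_\infty)$. You must argue that $n_\infty\nabla p_\infty=\nabla p_\infty$ a.e. The paper does this by passing to the limit in $\tfrac{\gamma+1}{\gamma}\n\nabla\p=\nabla(\n\p)=\nabla(\p^{(\gamma+1)/\gamma})$; since $\p^{(\gamma+1)/\gamma}\to p_\infty$, one obtains $n_\infty\nabla p_\infty=\nabla p_\infty$ directly. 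Alternatively, you could argue from your saturation relation: $p_\infty(1-n_\infty)=0$ forces $n_\infty=1$ a.e.\ on $\{p_\infty>0\}$, while $\nabla p_\infty=0$ a.e.\ on $\{p_\infty=0\}$ since $p_\infty\in H^1$. Either way, the step is short but necessary.

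A minor point: you invoke $\p\leq p_M$ to obtain $\n\leq p_M^{1/\gamma}$. The paper's uniform $L^\infty$-bound on $\p$ (Lemma~\ref{lemma:aprioriestimates}) comes from the supersolution $\Pi$ and is not $p_M$ in general when $\Phi$ is present. Your estimate works verbatim with the correct uniform bound in place of $p_M$, so this does not affect the argument.
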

\begin{proof}
Thanks to the bounds in Lemma~\ref{lemma:aprioriestimates},
\begin{equation*}
    \iint_\OmegaT \left|\partialt \p\right| +|\nabla \p|\dx{x}\dx{t} \leq C(T),
\end{equation*}
then, by the Fr\'echet-Kolmogorov Theorem, $\p$  is strongly compact in $L^1(Q_T)$, for all $T>0$.

We integrate Eq. \eqref{eq:p} against a test function $\varphi \in C^{\infty}_{\text{c}}(Q_T)$ to obtain
\begin{align*}
	\iint_{Q_T} \frac{\partial \p}{\partial t} \varphi\dx{x}\dx{t} = & (1-\gamma)\left( \iint_{Q_T}  |\nabla \p|^2\varphi\dx{x}\dx{t} + \iint_{Q_T} \nabla \p \cdot \nabla \Phi \varphi\dx{x}\dx{t} \right)\\
	&-\gamma \iint_{Q_T} \p \nabla \p \cdot \nabla \varphi\dx{x}\dx{t} -\gamma \iint_{Q_T} \p \nabla \Phi \cdot \nabla \varphi\dx{x}\dx{t}\\
	&+ \gamma \iint_{Q_T} \p G(\p)\varphi\dx{x}\dx{t} .
\end{align*}
Dividing by $\gamma-1$ and passing to the limit $\gamma \rightarrow \infty$, we obtain
\begin{align*}
	\lim_{\gamma\rightarrow\infty}&\left[-\iint_{Q_T} \left( |\nabla \p|^2\varphi + \p \nabla \p \cdot \nabla \varphi\right)\dx{x}\dx{t}\right. \\
	& \;\, \left. -\iint_{Q_T} \left(\nabla \p \cdot \nabla \Phi \varphi + \p \nabla \Phi \cdot \nabla \varphi \right)\dx{x}\dx{t} + \iint_{Q_T} \p G(\p)\varphi\dx{x}\dx{t}\right] =0.
\end{align*}
It remains to identify the limit.
By the strong convergence of $\p$ and $\nabla \p$ in $L^2(Q_T)$ we have
\begin{align*}
	&-\iint_{Q_T} \left( |\nabla p_\infty|^2\varphi+p_\infty \nabla p_\infty \cdot \nabla \varphi\right)\dx{x}\dx{t} -\iint_{Q_T} \left(\nabla p_\infty \cdot \nabla \Phi \varphi + p_\infty \nabla \Phi \cdot \nabla \varphi \right)\dx{x}\dx{t} \\
	&+ \iint_{Q_T} p_\infty G(p_\infty)\varphi \dx{x}\dx{t} = 0,
\end{align*}
\ie,
\begin{equation*}
	p_\infty (\Delta p_\infty + \Delta \Phi + G(p_\infty))=0,
\end{equation*}
in the distributional sense.
	
Now, we prove that Eq. \eqref{eq:limsys} and Eq. \eqref{eq:limsys2} are satisfied. By Lemma~\ref{lemma:aprioriestimates}, we have
\begin{equation*}
    \iint_\OmegaT \left|\partialt \n\right| +|\nabla \n|\dx{x}\dx{t}  \leq C(T),
\end{equation*}
and then we infer the compactness of the density. Up to a subsequence, we also have almost everywhere convergence, both for $\n$ and $\p$.
Passing to the limit in the relation $\p^{(1+\gamma)/\gamma}=\n \p$, we obtain 
\begin{equation*}
    p_\infty(1-n_\infty)=0,
\end{equation*}
a.e. in $Q_T$.
	
Now, we may pass to the limit in Eq. \eqref{eq:n} to obtain
\begin{equation*}
    \partialt{n_\infty} = \nabla\cdot \prt*{n_\infty \nabla p_\infty} + n_\infty G(p_\infty) + \nabla\cdot\prt*{n_\infty \nabla \Phi}. 
\end{equation*}
From the following relation 
\begin{equation*}
    \frac{1+\gamma}{\gamma}\n \nabla \p = \p \nabla \n+\n \nabla \p,
\end{equation*}
we infer $p_\infty \nabla n_\infty=0$, and thus
\begin{equation*}
    n_\infty \nabla p_\infty =\nabla p_\infty.
\end{equation*}
By consequence, $n_\infty$ and $p_\infty$ satisfy
\begin{equation*}
     \partialt{n_\infty} = \Delta p_\infty + n_\infty G(p_\infty) + \nabla\cdot\prt*{n_\infty \nabla \Phi},
\end{equation*}
which completes the proof.
\end{proof}
\section{Uniqueness of the Limit Pressure}
\label{sec:uniq_limit}
This section is dedicated to proving the following statement.
\begin{theorem}[Uniqueness of $n_\infty$ and $p_\infty$]
The incompressible limit obtained in the previous section, $(n_\infty, p_\infty)$, \cf  Eq. \eqref{eq:limitproblem} is unique.
\end{theorem}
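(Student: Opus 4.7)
The strategy is a duality-type argument reminiscent of H\"olmgren and tailored to the complementarity structure. Suppose that $(n_1, p_1)$ and $(n_2, p_2)$ are two weak solutions in $L^\infty(Q_T) \times L^2(0,T; H^1(\Omega))$ with the same initial datum; set $\bar n := n_1 - n_2$ and $\bar p := p_1 - p_2$, and expand the reaction term as
\begin{equation*}
n_1 G(p_1) - n_2 G(p_2) = G(p_1)\bar n + n_2\,h\,\bar p, \qquad h := \int_0^1 G'\bigl(\sigma p_1 + (1-\sigma)p_2\bigr)\dx{\sigma},
\end{equation*}
where $h, G(p_1) \in L^\infty(Q_T)$. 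Subtracting the two instances of \eqref{eq:limitproblem} yields, in $\curlyD'(Q_T)$,
\begin{equation*}
\partial_t \bar n - \Delta \bar p - G(p_1)\bar n - n_2 h \bar p - \nabla\cdot(\bar n \nabla \Phi) = 0, \qquad \bar n(0,\cdot) = 0.
\end{equation*}
It would suffice to show $\iint_{Q_T} \bar n\,\phi \dx{x}\dx{t} = 0$ for every $\phi \in C_c^\infty(Q_T)$, for then $\bar n \equiv 0$.

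The essential observation is that the complementarity $p_i(1-n_i) = 0$, coupled with $0 \le n_i \le 1$ and $p_i \ge 0$, forces $\bar p\,\bar n \ge 0$ almost everywhere (a three-case check). Hence, defining the non-negative measurable field $A := \bar p/\bar n$ on $\{\bar n \ne 0\}$ and $A := 0$ elsewhere, one has $\bar p = A\bar n$ together with the crucial pointwise bound $A|\bar n| = |\bar p| \le 2 p_M$. Testing the difference equation against a smooth $\psi$ with $\psi(T,\cdot) = 0$ and inserting $\bar p = A\bar n$ yields
\begin{equation*}
\iint_{Q_T} \bar n \bigl[\partial_t \psi + A \Delta \psi - \nabla \Phi \cdot \nabla \psi + c\,\psi\bigr]\dx{x}\dx{t} = 0,
\end{equation*}
with $c := G(p_1) + n_2 h A$. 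The task is to choose $\psi$ such that the bracket equals a prescribed $\phi$.

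Since $A$ is, in general, unbounded and only measurable, one cannot directly invert the degenerate dual operator. I would regularise by setting $A_\epsilon := \bigl(\min(A, 1/\epsilon) + \epsilon\bigr) \star \rho_\epsilon$ (so that $\epsilon \le A_\epsilon \le 1/\epsilon + 2\epsilon$ is smooth), and take a smooth bounded approximation $c_\epsilon$ of $c$, and then solve the uniformly parabolic backward Cauchy problem
\begin{equation*}
\partial_t \psi_\epsilon + A_\epsilon \Delta \psi_\epsilon - \nabla \Phi \cdot \nabla \psi_\epsilon + c_\epsilon \psi_\epsilon = \phi, \qquad \psi_\epsilon(T,\cdot) = 0.
\end{equation*}
Classical parabolic theory gives a smooth $\psi_\epsilon$; the crux is the uniform bound
\begin{equation*}
\|\psi_\epsilon\|_{L^\infty(Q_T)} + \iint_{Q_T} A_\epsilon |\Delta \psi_\epsilon|^2 \dx{x}\dx{t} \le C,
\end{equation*}
independent of $\epsilon$, obtained by multiplying the dual equation by $\Delta \psi_\epsilon$ and absorbing drift and lower-order contributions via \eqref{eq:assptn_phi_BV} and Young's inequality.

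Using $\psi_\epsilon$ as test function in the difference equation produces the identity
\begin{equation*}
\iint_{Q_T}\bar n\,\phi\,\dx{x}\dx{t} = \iint_{Q_T} \bar n (A_\epsilon - A)\Delta\psi_\epsilon\,\dx{x}\dx{t} + \iint_{Q_T} \bar n (c_\epsilon - c)\psi_\epsilon\,\dx{x}\dx{t}.
\end{equation*}
By Cauchy--Schwarz, the first error is bounded by $\bigl(\iint_{Q_T}\bar n^2 (A_\epsilon - A)^2/A_\epsilon\bigr)^{1/2}\bigl(\iint_{Q_T} A_\epsilon|\Delta\psi_\epsilon|^2\bigr)^{1/2}$. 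On $\{A \le 1/\epsilon\}$ we have $|A_\epsilon - A| \lesssim \epsilon$ and $|\bar n|\le 2$; on $\{A > 1/\epsilon\}$ the key bound $A|\bar n| \le 2 p_M$ together with $A_\epsilon \ge \epsilon$ again makes the integrand $\mathcal{O}(\epsilon)$. Hence this error vanishes as $\epsilon \to 0$, and the second is handled analogously (and more easily) since $c_\epsilon \to c$ in $L^1_{\mathrm{loc}}$ while $\psi_\epsilon$ remains uniformly bounded. We conclude $\iint_{Q_T} \bar n \phi = 0$ for every $\phi \in C_c^\infty(Q_T)$, whence $\bar n \equiv 0$ and thus $\bar p = A\bar n \equiv 0$. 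The principal obstacle is the parabolic duality estimate on $\iint A_\epsilon|\Delta\psi_\epsilon|^2$ in the presence of the first-order term $-\nabla\Phi\cdot\nabla\psi_\epsilon$; the $L^\infty_{\mathrm{loc}}$-regularity of $D^2\Phi$ and $\nabla\Phi$ guaranteed by \eqref{eq:assptn_phi_BV} is precisely what permits absorption of this term into the diffusive part.
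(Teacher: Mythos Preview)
Your duality strategy is the same as the paper's, but the particular factorisation you choose introduces a genuine gap. You write $\bar p = A\bar n$ with $A := \bar p/\bar n$ on $\{\bar n\neq 0\}$ and $A:=0$ elsewhere. This identity is \emph{false} on the set $\{\bar n = 0,\ \bar p \neq 0\}$, which is precisely the set where $n_1=n_2=1$ but $p_1\neq p_2$; nothing in the hypotheses forces this set to be null. When you insert $\psi_\epsilon$ into the weak form, the resulting error term is not $\iint \bar n(A_\epsilon - A)\Delta\psi_\epsilon$ but rather $\iint(\bar n A_\epsilon - \bar p)\Delta\psi_\epsilon$, and on $\{\bar n = 0\}$ this contributes $-\iint_{\{\bar n=0\}}\bar p\,\Delta\psi_\epsilon$, which has no reason to vanish as $\epsilon\to 0$. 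The same leakage occurs in the reaction error. Consequently you cannot conclude $\iint\bar n\,\phi = 0$.

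The paper avoids this by factoring through $\mathcal Z := \bar n + \bar p$ instead of $\bar n$: since $\bar n\bar p\ge 0$, one has $\mathcal Z = 0$ if and only if $\bar n = \bar p = 0$, so the coefficients $\mathcal A := \bar n/\mathcal Z$ and $\mathcal B := \bar p/\mathcal Z$ are both \emph{bounded} between $0$ and $1$, and the identities $\bar n = \mathcal A\mathcal Z$, $\bar p = \mathcal B\mathcal Z$ hold everywhere. The regularised dual equation then has diffusion coefficient $\mathcal B_k/\mathcal A_k$ (bounded above by $k$), the key a priori bound is on $(\mathcal B_k/\mathcal A_k)^{1/2}(\Delta\psi_k-\mathcal C_k\psi_k)$ in $L^2$, and the error terms close because $\sqrt{k}\cdot\|\mathcal A-\mathcal A_k\|_{L^2}\lesssim k^{-1/2}$. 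After $\bar n=0$ is obtained, the paper deduces $\bar p=0$ by a separate (and easy) monotonicity argument testing the remaining identity against $p_1-p_2$.

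A secondary issue: your claim that $|A_\epsilon - A|\lesssim\epsilon$ on $\{A\le 1/\epsilon\}$ is not correct once you convolve with $\rho_\epsilon$, since $A$ is merely measurable and carries no modulus of continuity; without the mollification you lose smoothness of the coefficient. The paper's route with two bounded coefficients sidesteps this too, since only $L^2$-closeness of $\mathcal A_k,\mathcal B_k$ to $\mathcal A,\mathcal B$ is needed.
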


\begin{proof}
In order to prove uniqueness, we assume that $(n_1, p_1)$ and $(n_2,p_2)$ are two solutions and let $\Omega$ be a compact, simply connected Lipschitz set that contains the union of their supports. Upon subtracting the equation for $n_2$ from the equation for $n_1$ we see that difference, $n_1 - n_2$, satisfies
\begin{equation}
    \label{eq: n1-n2}
    \partialt{\prt*{n_1-n_2}}-\Delta (p_1-p_2) - \nabla\cdot\prt*{(n_1-n_2) \nabla \Phi} - (n_1 G(p_1) - n_2 G(p_2))=0.
\end{equation}
For the sake of simplicity, we shall use the short-hand notation $G_i=G(p_i)$, for $i=1,2$, and $v=\nabla\Phi$. Multiplying Eq. \eqref{eq: n1-n2} by a test function $\psi=\psi(x,t)$ and integrating by parts we get
\begin{equation}
    \label{eq: integral}
    \iint_\OmegaT \left[(n_1-n_2)\partialt\psi +(p_1-p_2) \Delta \psi -(n_1-n_2) \nabla \psi \cdot\nabla v+(n_1 G_1-n_2 G_2) \psi \right] \dx{x}\dx{t}=0.
\end{equation}
The strategy is to employ Hilbert's dual method to establish uniqueness. To this end we introduce the following notation
\begin{align*}
    \left\{
    \begin{array}{rl}
     \curlyZ \rmspace &:= n_1 - n_2 + p_1 - p_2, \\[1em]
     \curlyA \rmspace &:= \dfrac{n_1-n_2}{ Z},\\[1em]
     \curlyB \rmspace &:= \dfrac{p_1 - p_2}{ Z},\\[1em]
     \curlyC \rmspace &:= - n_2 \dfrac{G_1 - G_2}{p_1 - p_2},
     \end{array}
     \right.
\end{align*}
where we set $ \curlyA =  \curlyB = 0$, whenever $\curlyZ =0$. Using this notation we rewrite Eq. \eqref{eq: integral} which becomes
\begin{equation}\label{eq: integral_phi}
      \iint_\OmegaT \curlyZ \left[  \curlyA \partialt\psi + \curlyB \Delta \psi - \curlyA \nabla \psi \cdot\nabla v + (\curlyA G_1- \curlyB \curlyC) \psi \right] \dx{x}\dx{t}=0. 
\end{equation}
Note that, by definition,
\begin{equation*}
  0 \leq   \curlyA, \curlyB \leq 1, \quad\text{as well as}\quad 0\leq  \curlyC \leq \sup_{0\leq p\leq p_M} |G'(p)|.
\end{equation*}
In order to apply Hilbert's duality method, we have to find a solution, $\psi$, to the \emph{dual problem}
\begin{equation}
    \label{eq: nonparab_SYS}
     \curlyA \partialt\psi + \curlyB \Delta \psi - \curlyA \nabla \psi \cdot\nabla v+(\curlyA G_1- \curlyB \curlyC) \psi = \curlyA \xi,
\end{equation}
in $\OmegaT$, and $\psi=0$ on $\partial\Omega \times (0,T)$. The equation is complemented by the final time condition 
$\psi(x,T)=0$ for $x\in \Omega$. Here, $\xi$ is an arbitrary smooth function. If solved, substituting the solution to the dual problem, $\psi$, into Eq. \eqref{eq: integral_phi} would yield
\begin{equation}
    \label{eq:formal_strategy}
    \iint_\OmegaT \curlyA \curlyZ \xi \dx{x} \dx{t} = \iint_\OmegaT (n_1-n_2) \xi \dx{x} \dx{t} = 0,
\end{equation}
thus proving uniqueness of the density. Subsequently, from Eq. \eqref{eq: integral}, the uniqueness of the pressure follows.

However, since the coefficient of Eq. \eqref{eq: nonparab_SYS} are not smooth and $A$ and $B$ can vanish, the equation is not uniformly parabolic and we need to regularise the system first. To this end, let $\{\curlyA_k\}, \{\curlyB_k\}, \{\curlyC_k\}, \{v_k\}$, $\{\curlyG_{1,k}\}$ be approximating sequences of smooth and bounded functions  such that
\begin{subequations}
\label{eq: reg_coeff}
\begin{align}
    \|\curlyA-\curlyA_k\|_{L^2(\Omega_T)}, \|\curlyB-\curlyB_k\|_{L^2(\Omega_T)}, \|\curlyC-\curlyC_k\|_{L^2(\Omega_T)}, \|G_1 - \curlyG_{1,k}\|_{L^2(\Omega_T)}, \|v-v_k\|_{L^2(\Omega_T)}\leq \frac1k,
\end{align}
such that
\begin{align}
     1/k \leq \curlyA_k, \curlyB_k \leq 1, \quad\text{as well as} \quad   0\leq \curlyC_k, |\curlyG_{1,k}|  \leq C,
\end{align}
and
\begin{align}
     &\|\partial_t \curlyC_k\|_{L^1(\Omega_T)},  \|\nabla \curlyG_{1,k}\|_{L^2(\Omega_T)}   \leq C,
\end{align}
where $C>0$ is some positive constant.
\end{subequations}
Using the regularised  quantities, we consider the regularised equation
\begin{equation}
    \label{eq: regular_SYS}
    \partialt{\psi_k} +\frac{\curlyB_k}{\curlyA_k} \Delta \psi_k - \nabla \psi_k \cdot\nabla v_k+ \prt*{ \curlyG_{1,k} - \frac{\curlyB_k \curlyC_k}{\curlyA_k}} \psi_k =  \xi,
\end{equation}
in $\OmegaT$, and $\psi_k=0$, on $\partial\Omega \times (0,T)$, and $\psi_k(T, x)=0$, in $\Omega$. Here, $\xi$ denotes an arbitrary smooth test function which is crucial for this approach, as discussed above, \cf Eq. \eqref{eq:formal_strategy}. Since the coefficient $\curlyB_k/\curlyA_k$ is smooth and bounded from away from zero, the equation is uniformly parabolic, whence we infer the existence of a smooth solution, $\psi_k$.

Using $\psi_k$ as a test function in Eq. \eqref{eq: integral_phi} and thanks to Eq. \eqref{eq: regular_SYS} we get
\begin{align*}
    0 &= \iint_\OmegaT \curlyZ \prt*{\curlyA\partialt{\psi_k} + \curlyB \Delta \psi_k - \curlyA \nabla v \cdot \nabla \psi_k + (\curlyA G_1 - \curlyB \curlyC)\psi_k}\dx{x}\dx{t}\\[1em]
    &= \iint_\OmegaT \curlyZ \curlyA \prt*{-\frac{\curlyB_k}{\curlyA_k}  \Delta \psi_k + v_k \cdot \nabla \psi_k - \prt*{\curlyG_{1,k}-\frac{\curlyB_k \curlyC_k}{\curlyA_k}}\psi_k +\xi }\dx{x}\dx{t}\\[1em]
    &\quad +\iint_\OmegaT \curlyZ \prt*{\curlyB \Delta \psi_k - \curlyA v \cdot \nabla \psi_k + (\curlyA G_1 - \curlyB \curlyC)\psi_k } \dx{x}\dx{t}\\[1em]
     &=\iint_\OmegaT \curlyZ \curlyA \xi + \iint_\OmegaT \curlyZ \frac{\curlyB_k}{\curlyA_k} \prt*{\curlyA - \curlyA_k}\prt*{-\Delta \psi_k + \curlyC_k \psi_k} \dx{x}\dx{t}\\[1em]
     &\quad + \iint_\OmegaT \curlyZ (\curlyB_k - \curlyB)(-\Delta \psi_k + \curlyC_k \psi_k) \dx{x}\dx{t} +\iint_\OmegaT \curlyZ \curlyB (\Delta \psi_k - \curlyC \psi_k)\dx{x}\dx{t} \\[1em]
     &\quad+ \iint_\OmegaT \curlyZ \curlyB (- \Delta \psi_k + \curlyC_k \psi_k)\dx{x} \dx{t} + \iint_\OmegaT \curlyZ \curlyA \psi_k (G_1 - \curlyG_{1,k}) \dx{x}\dx{t}\\[1em]
     &\quad+\iint_\OmegaT \curlyZ \curlyA \nabla \psi_k \cdot (v_k - v)\dx{x}\dx{t}.
\end{align*}
Using the definition of $\curlyA$, $\curlyB$, and $\curlyZ$, we finally obtain
\begin{equation*}
    \iint_\OmegaT (n_1-n_2) \xi \dx{x}\dx{t}  = I^1_k - I^2_k + I^3_k -  I^4_k + I^5_k,
\end{equation*}
where
\begin{align*}
    I^1_k =& \iint_\OmegaT (n_1-n_2 + p_1 -p_2) \frac{\curlyB_k}{\curlyA_k}(\curlyA - \curlyA_k)(\Delta \psi_k - \curlyC_k \psi_k)\dx{x}\dx{t},\\[1em]
    I^2_k =& \iint_\OmegaT (n_1-n_2+p_1-p_2) (\curlyB - \curlyB_k)(\Delta \psi_k - \curlyC_k \psi_k) \dx{x}\dx{t},\\[1em]
    I^3_k =& \iint_\OmegaT (p_1 - p_2) (\curlyC - \curlyC_k)\psi_k \dx{x}\dx{t},\\[1em]
    I^4_k =& \iint_\OmegaT (n_1-n_2)(G_1 - \curlyG_{1,k}) \psi_k \dx{x}\dx{t},\\[1em]
    I^5_k =& \iint_\OmegaT (n_1-n_2) \nabla\psi_k \cdot (v-v_k) \dx{x}\dx{t}.
\end{align*}
We aim at showing that 
$$
    \lim_{k\rightarrow\infty}I^i_k=0,
$$ 
for $i=1,\dots,5$, in order to be able to conclude that $n_1=n_2$. Before proving the convergence of each $I^i_k$, we need certain uniform bounds which we collect and state in the subsequent lemma.
\begin{lemma}[Uniform bounds]
There exist a positive constant $C>0$, independent of $k$, such that
\begin{align}
    \label{eq: bound on lap psin}
    \begin{split}
        \sup_{0\leq t\leq T} &\|\psi_k(t)\|_{L^\infty(\Omega)}\leq C, \quad \sup_{0\leq t\leq T}  \|\nabla\psi_k(t)\|_{L^2(\Omega)}\leq C,\\
        &\|(\curlyB_k/\curlyA_k)^{1/2}(\Delta \psi_k -\curlyC_k \psi_k)\|_{L^2(\OmegaT)}\leq C.
    \end{split}
\end{align}
\end{lemma}
\begin{proof}
The $L^\infty$-bound comes directly from the maximum principle applied to Eq. \eqref{eq: regular_SYS}, since $\xi$ is bounded and
$$
    \curlyG_{1,k} - \frac{\curlyB_k \curlyC_k}{\curlyA_k} \leq C.
$$
Now we multiply Eq. \eqref{eq: regular_SYS} by $(\Delta \psi_k - \curlyC_k \psi_k)$ and integrate in $(t,T)\times\Omega$ to obtain
\begin{align}
   \label{eq: final}
   \begin{split}
   - \int_t^T & \int_\Omega \partialt{}\frac{|\nabla \psi_k|^2}{2} \dx{x}\dx{s}- \int_t^T\int_\Omega \frac{\curlyC_k}{2}\partialt{}\psi_k^2 \dx{x}\dx{s} + \int_t^T\int_\Omega \frac{\curlyB_k}{\curlyA_k} |\Delta \psi_k - \curlyC_k \psi_k|^2\dx{x}\dx{s} \\[0.7em]
   &= \underbrace{\int_t^T\int_\Omega v \cdot \nabla\psi_k (\Delta \psi_k - \curlyC_k \psi_k)\dx{x}\dx{s}}_{\curlyI_1} + \underbrace{-\int_t^T\int_\Omega \curlyG_{1,k}\psi_k (\Delta \psi_k - \curlyC_k \psi_k )\dx{x}\dx{s}}_{\curlyI_2}\\[0.7em]
   &+ \underbrace{\int_t^T\int_\Omega \xi (\Delta \psi_k - \curlyC_k \psi_k )\dx{x}\dx{s}}_{\curlyI_3},
   \end{split}
\end{align}
where we shall bound each of the terms, $\curlyI_i$, for $i=1,2,3$, individually. First note that
\begin{align*}
    \curlyI_1 
    &= \int_t^T\int_\Omega v \cdot \nabla\psi_k \Delta \psi_k \dx{x}\dx{s} - \int_t^T\int_\Omega v \cdot \nabla\psi_k \curlyC_k \psi_k\dx{x}\dx{s}\\[0.7em]
    &= \curlyI_{1,1} + \curlyI_{1,2}.
\end{align*}
Integrating by parts in the first term of $\curlyI_1$ we get
\begin{align*}
    \curlyI_{1,1} &= - \int_t^T\int_\Omega \sum_{i,j=1}^d \frac{\partial v^{(i)}}{\partial x_j} \frac{\partial \psi_k}{\partial x_i} \frac{\partial\psi_n}{\partial x_j} \dx{x}\dx{s} - \int_t^T\int_\Omega \sum_{i,j=1}^d v^{(i)} \frac{\partial^2\psi_k}{\partial x_i \partial x_j} \frac{\partial \psi_k}{\partial x_j}\dx{x}\dx{s}\\[0.7em]
    &= -\int_t^T\int_\Omega \sum_{i,j=1}^d \frac{\partial v^{(i)}}{\partial x_j} \frac{\partial \psi_k}{\partial x_i} \frac{\partial\psi_n}{\partial x_j} \dx{x}\dx{s} + \int_t^T\int_\Omega  \frac{|\nabla \psi_k|^2}{2} \div v  \dx{x}\dx{s}\\[0.7em]
    &\leq  \prt*{d \|\nabla v\|_{L^\infty} + \frac 1 2 \|\nabla \cdot v\|_{L^\infty}} \int_t^T\int_\Omega |\nabla \psi_k|^2\dx{x}\dx{s},
\end{align*}
where $v^{(i)}$ is the $i$-th component of the vector $v$ and $\nabla v $ is the matrix with element $(\nabla v)_{i,j}=\partial_j v^{(i)}$. Similarly, we observe
\begin{align*}
    \curlyI_{1,2} &= - \int_t^T \int_\Omega v \cdot \nabla \psi_k \curlyC_k \psi_k \dx{x}\dx{s}\\[0.7em]
    &\leq \frac12 \|v\|_{L^\infty(\OmegaT)} \|\curlyC_k\|_{L^\infty(\OmegaT)} \|\psi_k\|_{L^2(\OmegaT)}^2 + \frac12\|\nabla \psi_k\|_{L^2(\OmegaT)}^2\\[0.7em]
    &\leq C + C\|\nabla \psi_k\|_{L^2(\OmegaT)}^2,
\end{align*}
with $C>0$ independent of $k$, after applying Young's inequality. Hence
\begin{align*}
    \curlyI_1 \leq  C + C\|\nabla \psi_k\|_{L^2(\OmegaT)}^2.
\end{align*}
Next, let us address the term $\curlyI_2$. We observe that
\begin{align*}
    \curlyI_2 &= -\int_t^T\int_\Omega \curlyG_{1,k}\psi_k (\Delta \psi_k - \curlyC_k \psi_k )\dx{x}\dx{s} \\[0.7em]
    &= \int_t^T\int_\Omega \curlyG_{1,k}|\nabla \psi_k|^2 \dx{x}\dx{s} +  \int_t^T\int_\Omega \psi_k \nabla\psi_k \cdot \nabla \curlyG_{1,k} \dx{x}\dx{s} + \int_t^T\int_\Omega \curlyG_{1,k} \curlyC_k \psi_k \dx{x}\dx{s}.
\end{align*}
We note that $\|\curlyG_{1,k}\|_{L^\infty(\Omega_T)}$ whence we obtain bounds for the first and the last term, respectively. In addition, we recall $\|\nabla \curlyG_{1,k}\|_{L^2(\Omega_T)}\leq C$, whence, upon using Young's inequality, we get
\begin{align*}
   \int_t^T\int_\Omega \psi_k \nabla\psi_k \cdot \nabla \curlyG_{1,k} \dx{x}\dx{s}&\leq \frac 1 2  \|\psi_k\|_{L^\infty(\Omega_T)}\|\nabla \curlyG_{1,k}\|^2_{L^2(\Omega_T)} +\frac 1 2 \|\psi_k\|_{L^\infty(\Omega_T)} \int_t^T\int_\Omega | \nabla\psi_k|^2 \dx{x}\dx{s}\\[0.7em]
   &\leq C + C \int_t^T\int_\Omega | \nabla\psi_k|^2 \dx{x}\dx{s}.
\end{align*}
In combination we get
\begin{align*}
    \curlyI_2 \leq C + C \|\nabla \psi_k\|_{L^2(\OmegaT)}^2,
\end{align*}
with $C>0$ independent of $k$. Last, let us address the term $\curlyI_3$. We readily observe
\begin{align*}
    \curlyI_3 &= \int_t^T\int_\Omega \xi (\Delta \psi_k - \curlyC_k \psi_k )\dx{x}\dx{s}\\[0.7em]
    &\leq C,
\end{align*}
integrating by parts twice and using the $L^\infty$-bounds. Using the bounds obtained above, the right-hand side of Eq. \eqref{eq: final} can be bounded as follows
\begin{align*}
    C + &C \|\nabla \psi_k\|_{L^2(\OmegaT)}^2 \\[0.7em]
    &\geq - \int_t^T \int_\Omega \partialt{}\frac{|\nabla \psi_k|^2}{2} \dx{x}\dx{s}- \int_t^T\int_\Omega \frac{\curlyC_k}{2}\partialt{}\psi_k^2 \dx{x}\dx{s} + \int_t^T\int_\Omega \frac{\curlyB_k}{\curlyA_k} |\Delta \psi_k - \curlyC_n \psi_k|^2\dx{x}\dx{s} \\[0.7em]
   &\geq - \int_t^T \frac{\dx{}}{\dx{t}}\int_\Omega \frac{|\nabla \psi_k|^2}{2} \dx{x}\dx{s} + \int_t^T\int_\Omega \partialt{\curlyC_k} \frac{\psi_k^2}{2} \dx{x}\dx{s} + \int_t^T\int_\Omega \frac{\curlyB_k}{\curlyA_k} |\Delta \psi_k - \curlyC_k \psi_k|^2\dx{x}\dx{s}\\[0.7em]
   &\qquad + \int_\Omega \frac{\curlyC_k(t)
   \psi_k^2(t)}{2}\dx{x}\\[0.7em]
   &\geq \frac12 \|\nabla \psi_k(\cdot, t)\|_{L^2(\Omega)}^2 - \|\partial_t \curlyC_k\|_{L^1(\OmegaT)} \|\psi_k\|_{L^\infty(\OmegaT)}^2 + \int_t^T\int_\Omega \frac{\curlyB_k}{\curlyA_k} |\Delta \psi_k - \curlyC_k \psi_k|^2\dx{x}\dx{s}\\[0.7em]
   &\qquad - \frac12\|\curlyC_k\|_{L^\infty(\OmegaT)} \|\psi_k\|_{L^2(\OmegaT)}^2\\[0.7em]
   &\geq \frac12 \|\nabla \psi_k(\cdot, t)\|_{L^2(\Omega)}^2  + \int_t^T\int_\Omega \frac{\curlyB_k}{\curlyA_k} |\Delta \psi_k - \curlyC_k \psi_k|^2\dx{x}\dx{s}- C,
\end{align*}
having used the regularity assumptions on the regularised coefficients, \cf Eq. \eqref{eq: reg_coeff}.

Finally, since $\curlyC_k$ is positive, we get
\begin{align}
    \label{eq: to_gronwall}
    \frac{1}{2} \int_\Omega |\nabla \psi_k(t)|^2\dx{x}  + \int_t^T\int_\Omega \frac{\curlyB_k}{\curlyA_k} |\Delta \psi_k - \curlyC_k \psi_k|^2\dx{x}\dx{s}\leq C + C \int_t^T\int_\Omega | \nabla\psi_k|^2 \dx{x}\dx{s}.
\end{align}
Introducing the notation
$$
    Q(s):=\int_\Omega |\nabla\psi_k(s,x)|^2 \dx{x},
$$
we observe that Eq. \eqref{eq: to_gronwall} now reads
\begin{equation*}
    Q(t) \leq C + C \int_t^T Q(s) \dx{s},
\end{equation*}
and by Gronwall's lemma we conclude that 
\begin{equation*}
    \sup_{0\leq t \leq T} Q(t)= \sup_{0\leq t \leq T}\|\nabla \psi_k(t)\|^2_{L^2(\Omega)}\leq C.
\end{equation*}
The third bound of Eq. \eqref{eq: bound on lap psin} comes a posteriori from Eq. \eqref{eq: to_gronwall}, which completes proof.
\end{proof}
Thanks to these uniform bounds, we obtain
\begin{align*}
    I^1_k 
    &= \iint_\OmegaT (n_1-n_2 + p_1 -p_2) \frac{\curlyB_k}{\curlyA_k}(\curlyA - \curlyA_k)(\Delta \psi_k - \curlyC_k \psi_k)\dx{x}\dx{t}\\[1em]
    &\leq C \|(\curlyB_k / \curlyA_k)^{1/2} (\curlyA - \curlyA_k)\|_{L^2(\Omega_T)}\\[1em]
    &\leq C k^{1/2} \|\curlyA - \curlyA_k\|_{L^2(\Omega_T)}\\[1em]
    &\leq C/k^{1/2},
\end{align*}
and, similarly,
\begin{align*}
    I^2_n &= \iint_\OmegaT (n_1-n_2+p_1-p_2) (\curlyB - \curlyB_k)(\Delta \psi_k - \curlyC_k \psi_k) \dx{x}\dx{t}\\[1em]
    &\leq C k^{1/2} \|\curlyB - \curlyB_k\|_{L^2(\Omega_T)}\\[1em]
    &\leq C / k^{1/2}.
\end{align*}
Finally, we have
\begin{align*}
    I^3_k &= \iint_\OmegaT (p_1-p_2) (\curlyC - \curlyC_n)\psi_k \dx{x}\dx{t}\\[1em]
    &\leq C \|\curlyC - \curlyC_k\|_{L^2(\Omega_T)}\\[1em]
    &\leq C / k,
\end{align*}
and
\begin{align*}
    I^4_k &= \iint_\OmegaT (n_1-n_2)(G_1 - \curlyG_{1,k}) \psi_n \dx{x}\dx{t}\\[1em]
    &\leq C\|G_1 - \curlyG_{1,k}\|_{L^2(\Omega_T)}\\[1em]
    &\leq C /k,
\end{align*}
as well as
\begin{align*}
    I^5_n &= \iint_\OmegaT (n_1-n_2) \nabla\psi_n \cdot (v-v_k) \dx{x}\dx{t}\\[1em]
    &\leq C\|v-v_k\|_{L^2(\Omega_T)}\\[1em]
    &\leq C / k.
\end{align*}
In summary, we have 
\begin{equation*}
    \iint_\OmegaT (n_1-n_2) \xi \dx{x}\dx{t}  = I^1_k - I^2_k + I^3_k -  I^4_k + I^5_k \longrightarrow 0,
\end{equation*}
as $k\to \infty$, and therefore $n_1=n_2$. From Eq. \eqref{eq: integral} we have
\begin{equation*}
    \iint_\OmegaT \prt*{(p_1-p_2)\Delta \psi + n_1 (G(p_1)-G(p_2))\psi}\dx{x}\dx{t} = 0.
\end{equation*}
Taking a smooth approximation of $p_1-p_2$ as test function we get
\begin{equation*}
    \iint_\OmegaT |\nabla (p_1-p_2)|^2 \dx{x}\dx{t} =\iint_\OmegaT  n_1 (G(p_1)-G(p_2))(p_1-p_2)\dx{x}\dx{t},
\end{equation*}
and, by the monotonicity of $G$, \cf Eq. \eqref{eq:assptn_growth}, we conclude that $p_1=p_2$.
\end{proof}

\section{Velocity of the boundary for patches}
\label{sec:velo_bdry}
Let us recall that the Hele-Shaw problem is given by
\begin{equation}
\label{eq:HSgeometric}
\left\{
\begin{array}{rll}
    -\Delta p_\infty \!\!\! &= \Delta \Phi +G(p_\infty), &\text{ in } \Omega(t),\\[0.7em]
    V\!\!\! &= -\prt*{\nabla p_\infty + \nabla \Phi} \cdot \nu,  &\text{ on } \partial \Omega(t),
\end{array}
\right.
\end{equation}
where $\nu$ indicates the outward normal to the boundary and $\Omega(t):=\{x; \ p_\infty(x,t)>0\}$. Below we give a characterisation of patch solutions, \ie, the indicator of the growing domain described by Eq. \eqref{eq:HSgeometric} satisfies the incompressible limit equation, \cf Eq. \eqref{eq:limitproblem}. To this end, we suppose that the boundary $\partial\Omega(t)$ admits a Lipschitz parameterisation $\partial\Omega(t)=\{x(t,\alpha) \, | \, \alpha\in [0,1], x(t,0)=x(t,1)\}$ that satisfies
\begin{equation}\label{eq: para_velocity}
    \ddt{x(t, \alpha)}= -(\nabla p_\infty(x(t, \alpha),t)+\nabla \Phi(x(t,\alpha),t)).
\end{equation}

Then the characteristic function
\begin{equation}\label{eq: characteristic function}
    n_\infty(t)=\mathds{1}_{\Omega(t)}.
\end{equation}
satisfies the limit problem, Eq.  \eqref{eq:limitproblem}.

\begin{theorem}[Characterisation of the Free Boundary Velocity]
Let $\Omega_0$ be a bounded and Lipschitz continuous domain. Let us consider the solution $(\Omega(t), p_\infty)$ to the free boundary problem, Eq. \eqref{eq:HSgeometric}, with initial data $\Omega_0$. Then, the characteristic function in Eq. \eqref{eq: characteristic function}, satisfies Eq. \eqref{eq:limitproblem}.
\end{theorem}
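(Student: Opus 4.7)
The plan is to verify Eq. \eqref{eq:limitproblem} in the sense of distributions for $n_\infty = \mathds{1}_{\Omega(t)}$ by testing against an arbitrary $\varphi \in C^\infty_{\mathrm{c}}(Q_T)$, converting both sides to boundary integrals over $\partial\Omega(t)$, and using the velocity law Eq. \eqref{eq: para_velocity} as the bridge between them. The key observation is that the Dirichlet condition $p_\infty|_{\partial\Omega(t)}=0$ together with the elliptic equation $-\Delta p_\infty = \Delta\Phi + G(p_\infty)$ inside $\Omega(t)$ turns the right-hand side into precisely the same boundary flux that the free-boundary motion generates on the left-hand side.

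First I would handle the left-hand side. Since $\varphi$ has compact support in $(0,T)$, a Reynolds transport argument based on the Lipschitz parameterisation $x(t,\alpha)$ yields
\begin{equation*}
    \langle \partial_t n_\infty, \varphi\rangle
    = -\iint_{Q_T} \mathds{1}_{\Omega(t)}\, \partial_t \varphi \dx{x}\dx{t}
    = \int_0^T \int_{\partial\Omega(t)} \varphi\, V \dx{\sigma}\dx{t},
\end{equation*}
and by the normal velocity law in Eq. \eqref{eq: para_velocity},
\begin{equation*}
    \langle \partial_t n_\infty, \varphi\rangle
    = -\int_0^T \int_{\partial\Omega(t)} \varphi\, (\nabla p_\infty + \nabla \Phi)\cdot \nu \dx{\sigma}\dx{t}.
\end{equation*}

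Next I would compute the distributional right-hand side. Since $p_\infty$ vanishes off $\Omega(t)$ and $p_\infty = 0$ on $\partial\Omega(t)$, two successive integrations by parts inside $\Omega(t)$ give
\begin{equation*}
    \langle \Delta p_\infty, \varphi\rangle
    = \int_0^T\!\int_{\Omega(t)} \varphi\, \Delta p_\infty \dx{x}\dx{t} - \int_0^T\!\int_{\partial\Omega(t)} \varphi\, \partial_\nu p_\infty \dx{\sigma}\dx{t},
\end{equation*}
and applying the elliptic relation $\Delta p_\infty = -\Delta\Phi - G(p_\infty)$ in $\Omega(t)$, one obtains
\begin{equation*}
    \langle \Delta p_\infty + n_\infty G(p_\infty), \varphi\rangle
    = - \int_0^T\!\int_{\Omega(t)} \varphi\, \Delta \Phi \dx{x}\dx{t} - \int_0^T\!\int_{\partial\Omega(t)} \varphi\, \partial_\nu p_\infty \dx{\sigma}\dx{t}.
\end{equation*}
An analogous computation for the convective term, using the divergence theorem, yields
\begin{equation*}
    \langle \nabla\cdot(n_\infty \nabla \Phi), \varphi\rangle
    = - \iint_{Q_T} \mathds{1}_{\Omega(t)}\, \nabla \Phi \cdot \nabla \varphi \dx{x}\dx{t}
    = \int_0^T\!\int_{\Omega(t)} \varphi \Delta \Phi \dx{x}\dx{t} - \int_0^T\!\int_{\partial\Omega(t)} \varphi\, \partial_\nu \Phi \dx{\sigma}\dx{t}.
\end{equation*}
Summing these three contributions, the interior integrals involving $\Delta \Phi$ cancel, leaving exactly $-\int_0^T\!\int_{\partial\Omega(t)} \varphi(\partial_\nu p_\infty + \partial_\nu \Phi)\dx{\sigma}\dx{t}$, which matches $\langle \partial_t n_\infty,\varphi\rangle$.

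The main obstacle I anticipate is justifying the integration-by-parts formulae and the Reynolds transport identity under the mere Lipschitz regularity of $\partial\Omega(t)$. One has to ensure that $\partial_\nu p_\infty$ admits a well-defined trace on $\partial\Omega(t)$; this is delivered by viewing $p_\infty$ as the $H^1$-solution of the elliptic problem in Eq. \eqref{eq:HSgeometric} on the Lipschitz domain $\Omega(t)$ with right-hand side $\Delta\Phi + G(p_\infty) \in L^2$, so that local $H^2$-regularity up to the boundary (and hence an $H^{1/2}(\partial\Omega(t))$-trace of the normal derivative) is available. Likewise, the Reynolds transport formula for Lipschitz moving domains with an $L^\infty$ normal velocity is standard and applies here since Eq. \eqref{eq: para_velocity} gives a bounded normal velocity. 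With these ingredients every boundary integral is well defined and the chain of identities above is rigorous.
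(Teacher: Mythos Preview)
Your argument is essentially the same as the paper's: both sides are reduced to the boundary flux $-\int_{\partial\Omega(t)}\varphi\,(\partial_\nu p_\infty+\partial_\nu\Phi)\dx\sigma$ via Reynolds' transport theorem for $\partial_t n_\infty$ and integration by parts plus the interior elliptic relation for the right-hand side. The only difference is cosmetic (you use space-time test functions and spell out more of the regularity justification, whereas the paper uses a space-only test function and leaves the trace issues implicit); one small caveat is that $H^2$-regularity up to the boundary is generally not available on merely Lipschitz domains, so the trace of $\partial_\nu p_\infty$ should be interpreted in $H^{-1/2}(\partial\Omega(t))$ via Green's formula rather than via $H^2$-regularity.
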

\begin{proof}
We have to show that $n_\infty(t)= \mathds{1}_{\Omega(t)}$ satisfies
\begin{equation*}
    \partialt{n_\infty}=\Delta p_\infty + \nabla\cdot(n_\infty\nabla\Phi)+n_\infty G(p_\infty),
\end{equation*}
in the distributional sense.
Given a test function $\psi=\psi(x)$, by Reynolds' transport Theorem and Eq. \eqref{eq: para_velocity}, we have
\begin{equation*}
   \int_{\R^d} \psi(x) \partialt {n_\infty} \dx{x} = \ddt \int_{\R^d} \psi(x) \mathds{1}_{\Omega(t)}\dx{x} = \int_{\partial\Omega(t)}  V \psi(x) \dx{x} = V \delta_{\partial\Omega(t)}.
\end{equation*}
On the other hand, it holds
\begin{equation*}
    \Delta p_\infty + \nabla\cdot (n_\infty \nabla\Phi) +n_\infty G(p_\infty)=-(\partial_\nu p_\infty +\partial_\nu \Phi )\delta_{\partial \Omega(t)}= V \delta_{\partial\Omega(t)},
\end{equation*}
in the sense of distributions, as can be seen by the following argument. First, by the definition of $\Omega(t)$ as the positivity set of $p_\infty$ and the fact that $n_\infty = \mathds{1}_{\Omega(t)}$ we observe that the weak formulation of the left-hand side can be manipulated as follows:
\begin{align*}
    &\int_{\R^d} -\nabla p_\infty \cdot\nabla \psi - n_\infty\nabla \Phi \cdot \nabla \psi + n_\infty G(p_\infty)\psi\dx{x} = \int_{\Omega(t)}-\nabla p_\infty \cdot\nabla \psi - \nabla \Phi \cdot \nabla \psi + G(p_\infty)\psi\dx{x}.
\end{align*}
Integrating by parts the right-hand side, we obtain
\begin{align*}
    \int_{\Omega(t)}& (\Delta p_\infty + \Delta \Phi+ G(p_\infty))\psi \dx{x} -\int_{\partial \Omega(t)}\partial_\nu p_\infty\psi\dx{x}-\int_{\partial \Omega(t)}\partial_\nu \Phi\psi\dx{x}\\
    &\quad = -\int_{\partial \Omega(t)}\partial_\nu p_\infty\psi \dx{x}-\int_{\partial \Omega(t)}\partial_\nu \Phi\psi\dx{x}
\end{align*}
where we used $\Delta p_\infty + \Delta \Phi+ G(p_\infty)=0,$ in $\curlyD'$, by Eq. \eqref{eq:HSgeometric}.
\end{proof}

%%%%%%%%%%%%%%%%%%%%%%%%%%%%%%%%%%%%%%%%%%%%%%%%%%%%%%%%%%%%%%
%%%%%%%%%%%%%%%%%%%%%%%%%%%%%%%%%%%%%%%%%%%%%%%%%%%%%%%%%%%%%%
%%%%%%%%%%%%%%%%%%%%%%%%%%%%%%%%%%%%%%%%%%%%%%%%%%%%%%%%%%%%%%
%%%%%%%%%%%%%%%%%%%%%%%%%%%%%%%%%%%%%%%%%%%%%%%%%%%%%%%%%%%%%%

\section*{Acknowledgements}
This project has received funding from the European Union's Horizon 2020 research and innovation program under the Marie Skłodowska-Curie (grant agreement No 754362). The authors are grateful for the discussions with Beno\^it Perthame at several occasions.

\bibliography{literature}
\bibliographystyle{plain}
\end{document}